\documentclass[10pt]{amsart}

\usepackage[colorlinks]{hyperref}
\usepackage{color,graphicx,shortvrb}
\usepackage[latin 1]{inputenc}

\usepackage[active]{srcltx} 

\usepackage{enumerate}
\usepackage{amssymb}
\usepackage{tikz-cd}

\usepackage [ all ]{xy}

\newtheorem{theorem}{Theorem}[section]

\newtheorem{corollary}[theorem]{Corollary}

\newtheorem{lemma}[theorem]{Lemma}

\newtheorem{proposition}[theorem]{Proposition}
\newtheorem{remark}[theorem]{Remark}

\def\J#1#2#3{ \left\{ #1,#2,#3 \right\} }

\def\NN{{\mathbb{N}}}
\def\11{\textbf{$1$}}
\def\CC{{\mathbb{C}}}

\DeclareGraphicsExtensions{.jpg,.pdf,.png,.eps}



\begin{document}

\title[Unitaries in JB$^*$-algebras]{Metric characterisation of unitaries in JB$^*$-algebras}

\author[M. Cueto-Avellaneda, A.M. Peralta]{Mar{\'i}a Cueto-Avellaneda, Antonio M. Peralta}

\address[M. Cueto-Avellaneda, A.M. Peralta]{Departamento de An{\'a}lisis Matem{\'a}tico, Facultad de
Ciencias, Universidad de Granada, 18071 Granada, Spain.}
\email{mcueto@ugr.es, aperalta@ugr.es}


\subjclass[2010]{Primary 17C65; 46L05; 46H70; 46L70, Secondary 46B20; 46K70; 46L70; 47C15}

\keywords{unitaries, geometric unitaries, vertex, extreme points, C$^*$-algebra, JB$^*$-algebra, JB$^*$-triple}

\date{}

\begin{abstract} Let $M$ be a unital JB$^*$-algebra whose closed unit ball is denoted by $\mathcal{B}_M$. Let $\partial_e(\mathcal{B}_M)$ denote the set of all extreme points of $\mathcal{B}_M$. We prove that an element $u\in \partial_e(\mathcal{B}_M)$ is a unitary if and only if the set $$\mathcal{M}_{u} = \{e\in \partial_e(\mathcal{B}_M) : \|u\pm e\|\leq \sqrt{2} \}$$ contains an isolated point. This is a new geometric characterisation of unitaries in $M$ in terms of the set of extreme points of $\mathcal{B}_M$.
\end{abstract}

\maketitle
\thispagestyle{empty}

\section{Introduction}

We know from a celebrated result of R.V. Kadison that the extreme points of the closed unit ball of a C$^*$-algebra $A$ are
precisely the maximal partial isometries in $A$, that is, the elements $u$ in $A$ such that $(1 - uu^*)A(1 - u^*u) = \{0\}$ (see \cite{Kad51}). Every unitary in $A$ is an extreme point of its closed unit ball, but the reciprocal implication is not always true. In 2002, C.A. Akemann and N. Weaver searched 
for a characterisation of partial isometries, unitaries, and invertible elements in a unital C$^*$-algebra $A$ in terms of the Banach space structure of certain subsets of $A$, the dual space, $A^*$, or the predual, $A_*$, when $A$ is a von Neumann algebra (cf. \cite{AkWea2002}). The resulting characterisations are called geometric because only the Banach space structure of $A$ is employed. It should be noted that the geometric characterisation of partial isometries in a C$^*$-algebra was subsequently extended to a geometric characterisation of tripotents in a general JB$^*$-triple (see, \cite{FerMarPe2004,FerPe18}). The geometric characterisation of untaries actually relies on a good knowledge on the \emph{set of states} of a Banach space $X$ relative to an element $x$ in its unit sphere, $S(X)$, defined by $$S_x:=\{ \varphi\in X^* : \varphi (x) = \|\varphi\| =1\}.$$ The element $x$ is called a \emph{vertex} of the closed unit ball of $X$ (respectively, a \emph{geometric unitary} of $X$) if $S_x$ separates the points of $X$ (respectively, spans $X^*$).\smallskip

Akemann and Weaver proved that a norm-one element $x$ in a C$^*$-algebra $A$ is (an \emph{algebraic}) \emph{unitary} (i.e. $x x^* =x^* x =1$) if and only if $S_x$ spans $A^*$. In a von Neumann algebra $W$ an analogous characterisation holds when one uses the predual, $W_*$, in lieu of the dual space and the \emph{set of normal states relative to $x$}, $S^x=\{ \varphi\in W_* : \varphi (x) = \|\varphi\| =1\},$ in place of $S_x$ (cf. \cite[Theorem 3]{AkWea2002}).\smallskip

An appropriate versions of the just commented result in the setting of JB$^*$-algebras and JB$^*$-triples was established by A. Rodr{\'i}guez Palacios in \cite{Rod2010} (see section \ref{sec:topologically isolated} for the missing notions). We recall that a complex (respectively,  real) \emph{Jordan algebra} $M$ is a (not-necessarily associative) algebra over the complex (respectively, real) field whose product is abelian and satisfies $(a \circ b)\circ a^2 = a\circ (b \circ a^2)$ ($a,b\in M$). A \emph{normed Jordan algebra} is a Jordan algebra $M$ equipped with a norm, $\|.\|$, satisfying $\|
a\circ b\| \leq \|a\| \ \|b\|$ ($a,b\in M$). A \emph{Jordan Banach algebra} is a normed Jordan algebra whose norm is complete.
Every real or complex associative Banach algebra is a real Jordan Banach algebra with respect to the product $a\circ b: = \frac12 (a b +ba)$.\smallskip

An element $a$ in a unital Jordan Banach algebra $J$ is called invertible whenever there exists $b\in J$ satisfying $a \circ b = 1$ and $a^2 \circ b = a.$ The element $b$ is unique and it will be denoted by $a^{-1}$ (cf. \cite[3.2.9]{HOS}
).\smallskip

A \emph{JB$^*$-algebra} is a complex Jordan Banach algebra $M$ equipped
with an algebra involution $^*$ satisfying  $\|\J a{a}a \|= \|a\|^3$, $a\in
M$ (we recall that $\J a{a}a  = 2 (a\circ a^*) \circ a - a^2 \circ a^*$).
A \emph{JB-algebra} is a real Jordan Banach algebra $J$ in which the norm satisfies
the following two axioms for all $a, b\in J$\begin{enumerate}[$(i)$]\item $\|a^2\| =\|a\|^2$;
\item $\|a^2\|\leq \|a^2 +b^2\|.$
\end{enumerate} The hermitian part, $M_{sa}$, of a JB$^*$-algebra, $M$, is always a JB-algebra. A celebrated theorem due to J.D.M. Wright asserts that, conversely, the complexification of every JB-algebra is a JB$^*$-algebra (see \cite{Wri77}). We refer to the monographs \cite{HOS} and \cite{Cabrera-Rodriguez-vol1} for the basic notions and results in the theory of JB- and JB$^*$-algebras.\smallskip

Every C$^*$-algebra $A$ is a JB$^*$-algebra when equipped with its natural Jordan product $a\circ b =\frac12 (a b +b a)$ and the original norm and involution. Norm-closed Jordan $^*$-subalgebras of C$^*$-algebras are called \emph{JC$^*$-algebras}. \smallskip

Two elements $a, b$ in a Jordan algebra $M$ are said to \emph{operator commute} if $$(a\circ c)\circ b= a\circ (c\circ b),$$ for all $c\in M$. By the \emph{centre} of $M$ 
we mean the set of all elements of $M$ which operator commute with any other element in $M$. \smallskip

We recall that an element $u$ in a unital JB$^*$-algebra $M$ is a \emph{unitary} if it is invertible and its inverse coincides with $u^*$. An element $s$ in a unital JB-algebra $J$ is called a \emph{symmetry} if $s^2 =1$. The set of all symmetries in $J$ will be denoted by Symm$(J)$. If $M$ is a JB$^*$-algebra, we shall write Symm$(M)$ for Symm$(M_{sa})$.\smallskip

The geometric characterisation of unitaries in JB$^*$-algebras reads as follows: For a norm-one element $u$ in a JB$^*$-algebra $M$, the following conditions are equivalent:\begin{enumerate}[$(1)$]\item $u$ is a unitary in $M$;
\item $u$ is a geometric unitary in $M$;
\item $u$ is a vertex of the closed unit ball of $M$,
\end{enumerate} (see \cite[Theorem 3.1]{Rod2010} and \cite[Theorem 4.2.24]{Cabrera-Rodriguez-vol1}, where the result is proved in the more general setting of JB$^*$-triples).\smallskip

Surprisingly, as shown by C.-W. Leung, C.-K. Ng, N.-C. Wong in \cite{LeNgWon2009}, the case of JB-algebras differs slightly from the result stated for JB$^*$-algebras. Suppose $x$ is a norm-one element in a JB-algebra $J$, then the following statements are equivalent:\begin{enumerate}[$(a)$]\item $x$ is a geometric unitary in $J$;
\item $x$ is a vertex of the closed unit ball of $J$;
\item $x$ is an isolated point of Symm$(J)$ (endowed with the norm topology);
\item $x$ is a central unitary in $J$;
\item The multiplication operator $M_x: z\mapsto x\circ z$ satisfies $M_x^2 = \hbox{id}_{J}$,
\end{enumerate} (see \cite[Theorem 2.6]{LeNgWon2009} or \cite[Proposition 3.1.15]{Cabrera-Rodriguez-vol1}).\smallskip

Except perhaps statement $(c)$ above, the previous characterisations rely on the set of states $S_x$ of the underlying Banach space at an element $x$ in the unit sphere, that is, they are geometric characterisations in which the structure of the whole dual space plays an important role.\smallskip

From a completely independent setting, the different attempts to solve the problem of extending a surjective isometry between the unit spheres of two Banach spaces to a surjective real linear isometry between the spaces (known as Tingley's problem) have produced a substantial collection of new ideas and devices which are, in most of cases, interesting by themselves (cf., for example, \cite{BeCuFerPe2018,CabSan19,Mori2017,MoriOza2018,Pe2018}). Let us borrow some words from \cite{CabSan19} \emph{``...it is really impressive the development of machinery and technics that this problem {\rm(}Tingley's problem{\rm)} has led to.''}. We shall place our focus on the next result, included by M. Mori in \cite{Mori2017}, which provides a new characterisation of unitaries in a unital C$^*$-algebra.\smallskip

From now on, the closed unit ball of a Banach space $X$ will be denoted by $\mathcal{B}_X$. The set of all extreme points of a convex set $C$ will be denoted by $\partial_e(C)$.

\begin{theorem}\label{t metric charact of unitaries Mori}\cite[Lemma 3.1]{Mori2017} Let $A$ be a unital C$^*$-algebra, and let $u\in \partial_e(\mathcal{B}_A).$ Then the following statements are equivalent:
\begin{enumerate}[$(a)$]\item $u$ is a unitary {\rm(}i.e., $uu^* = u^*u=1${\rm)};
\item The set $\mathcal{A}_{u} = \{e\in \partial_e(\mathcal{B}_A) : \|u\pm e\|= \sqrt{2} \}$ contains an isolated point.
\end{enumerate}
\end{theorem}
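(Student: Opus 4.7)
My strategy is to prove the two implications separately.

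For $(a)\Rightarrow(b)$, I pass to the $u$-isotope of $M$---a JB$^*$-algebra structure on the same Banach space in which the unitary $u$ becomes the unit, preserving the norm and the set of extreme points of $\mathcal{B}_M$---and thereby assume $u=1$. A useful observation is that for any extreme point $e\in\partial_e(\mathcal{B}_M)$, Shirshov-Cohn applied to the Jordan $*$-subalgebra generated by $\{1,e,e^*\}$ embeds it isometrically into a C$^*$-algebra $A$ in which $(1+e)(1+e)^*+(1-e)(1-e)^*=2+2ee^*$ has norm $2\|1+ee^*\|=2(1+\|ee^*\|)=4$, so $\|1+e\|^2+\|1-e\|^2\geq 4$; combined with $\|1\pm e\|\leq\sqrt{2}$ this forces the saturated equalities $\|1\pm e\|=\sqrt{2}$. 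The candidate isolated point is $i\cdot 1\in\mathcal{M}_1$, with membership from $\|1\pm i\|=\sqrt{2}$. For isolation, I take $e=a+ib\in\mathcal{M}_1$ with $a,b\in M_{sa}$ close to $i\cdot 1$ and combine the saturated conditions $\|1\pm e\|^2=2$ with the tripotent identity $\{e,e,e\}=e$ inside $A$; this should yield $a=0$ and $b^2=1$. Then $b\in\mathrm{Symm}(M_{sa})$ is close to $1$, and since $1$ is a central unitary of the JB-algebra $M_{sa}$ the Leung-Ng-Wong criterion quoted above gives that $1$ is an isolated point of $\mathrm{Symm}(M_{sa})$; hence $b=1$ and $e=i\cdot 1$.

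For $(b)\Rightarrow(a)$ I argue by contrapositive. Assume $u\in\partial_e(\mathcal{B}_M)$ is not a unitary; the JB$^*$-triple Peirce decomposition $M=M_2(u)\oplus M_1(u)$ associated to the complete tripotent $u$ then satisfies $M_1(u)\neq\{0\}$. The goal is to show that no $e_0\in\mathcal{M}_u$ can be isolated. The natural perturbations come from the one-parameter groups $\varphi_t=\exp(t\delta_x)$ of triple automorphisms generated by the inner triple derivations $\delta_x(z)=\{u,x,z\}-\{x,u,z\}$ for $x\in M_1(u)$. Each $\varphi_t$ is a surjective linear isometry of $M$ fixing $u$, so it preserves $\partial_e(\mathcal{B}_M)$ and the quantities $\|u\pm(\cdot)\|$; hence $t\mapsto\varphi_t(e_0)$ is a continuous curve inside $\mathcal{M}_u$ passing through $e_0$, which would contradict isolation provided it is non-constant.

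The principal difficulty is to guarantee that some $x\in M_1(u)\setminus\{0\}$ satisfies $\delta_x(e_0)\neq 0$. When the Peirce-$1$ component of $e_0$ relative to $u$ is non-zero, non-triviality follows from the Peirce calculus; the subtle case is $e_0\in M_2(u)$, where I plan to invoke the non-degeneracy of the triple action between $M_2(u)$ and $M_1(u)$ to produce such an $x$, or, failing that, construct distinct elements of $\mathcal{M}_u$ near $e_0$ by a direct perturbation combining an element of $\mathcal{M}_u\cap M_2(u)$ with a small piece of $M_1(u)$ and verifying extremality through the Peirce arithmetic. A secondary technical burden lies in the forward-direction computation forcing $a=0$: expanding $(1\pm e)(1\pm e)^*=1\pm 2a+ee^*$ and summing yields $\|2+2ee^*\|=4$, but extracting $a=0$ from the individually saturated norms requires combining the tripotent identity with the fact that $e$ is an extreme point of $\mathcal{B}_M$ rather than merely a partial isometry of the enveloping C$^*$-algebra.
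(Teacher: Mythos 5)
This theorem is not proved in the paper at all: it is quoted from Mori, and the ingredients of its proof are only revisited in Proposition \ref{p equivalences of being isolated}, Proposition \ref{p MO for Cstar algebras from a Jordan point of view}, Lemma \ref{l distance sqrt2 between the unit and a tripotent} and Corollary \ref{c distance smaller than or equal to sqrt2}. Your direction $(a)\Rightarrow(b)$ is essentially the right argument and parallels Lemma \ref{l distance sqrt2 between the unit and a tripotent} and Corollary \ref{c distance smaller than or equal to sqrt2}: after reducing to $u=1$, every extreme $e$ with $\|1\pm e\|\leq \sqrt{2}$ is $i$ times a symmetry, and $i\cdot 1$ is then isolated. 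But the step you describe as ``this should yield $a=0$ and $b^2=1$'' is not a formality; it is precisely the content of Lemma \ref{l distance sqrt2 between the unit and a tripotent}, obtained there from the order inequalities $0\leq \frac12 (1\pm e)(1\pm e)^*\leq 1$ by compressing with $U_{ee^*}$ and then with the off-diagonal corners. You need to supply that argument, not defer it.

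The direction $(b)\Rightarrow(a)$ contains a genuine error and leaves the only hard case unproved. The error: for $x\in M_1(u)$ one has $\{u,x,u\}\in \{M_2(u),M_1(u),M_2(u)\}=\{0\}$ by Peirce arithmetic, while $\{x,u,u\}=L(u,u)x=\frac12 x$, so your generator satisfies $\delta_x(u)=-\frac12 x\neq 0$; the groups $\varphi_t=\exp(t\delta_x)$ do \emph{not} fix $u$, and the identity $\|u\pm \varphi_t(e_0)\|=\|u\pm e_0\|$ on which your whole mechanism rests fails. The isometries that do fix $u$ are the maps $R_\lambda(u)=P_2(u)+\lambda P_1(u)+\lambda^2 P_0(u)$ of Proposition \ref{p metric charact of unitaries JBtriples partial sufficient}, and these only dispose of the case $P_1(u)(e_0)\neq 0$. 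In the remaining case $e_0=i(p-q)u\in M_2(u)$ --- exactly the one you postpone --- no continuous family of isometries fixing $u$ can move $e_0$ once $p,q$ are isolated, hence central, in $uu^*Auu^*$ (Proposition \ref{p equivalences of being isolated}); ``non-degeneracy of the triple action of $M_1(u)$'' produces nothing here. One must instead build the affine perturbation $y_\theta=P_2(u^*)(y)+\cos(\theta)P_1(u^*)(y)+\sin(\theta)P_1(u^*)(1)$ of Proposition \ref{p MO for Cstar algebras from a Jordan point of view}, which is not the image of $y$ under any automorphism, and verify by direct computation that $\frac{1}{\sqrt2}(u\pm y_\theta)$ is an isometry --- this also matters because $\mathcal{A}_u$ is defined by the equality $\|u\pm e\|=\sqrt{2}$, so the approximants must be shown to satisfy it exactly --- and that $y_\theta$ is again a maximal partial isometry. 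As it stands, your proposal proves neither subcase of the converse.
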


The advantage of the previous result is that it characterises unitaries among extreme points of the closed unit ball of a unital C$^*$-algebra $A$ in terms of the subset of all points in $\partial_e(\mathcal{B}_A)$ at distance $\sqrt{2}$ from the element under study. We do not need to deal with the dual of $A$.\smallskip

The purpose of this note is to explore the validity of this characterisation in the setting of JB$^*$-algebras. In a first result we prove that for each tripotent $u$ in a JB$^*$-triple $E$ the equality $$\{ e\in \hbox{Trip}(E_2(u)) : \|u\pm e\| \leq \sqrt{2} \} = \{i(p-q) : p,q\in \mathcal{P}( E_2(u)) \hbox{ with }p\perp q\}$$ holds true, where given a JB$^*$-triple $E$, the symbol $\hbox{Trip}(E)$ stands for the set of all tripotents in $E$. Furthermore, if $u$ is unitary in $E$, then $$ \mathcal{E}_u =\left\{ e\in \partial_e(\mathcal{B}_E) : \|u\pm e\| \leq \sqrt{2} \right\}= i \hbox{Symm} (E_2(u))$$
$$=\{i (p-q): p,q\in \hbox{Trip}(E), p,q\leq u, p\perp q, p+q =u  \}$$ and the elements $\pm i u$ are isolated in $\mathcal{E}_u$ (Corollary \ref{c distance smaller than or equal to sqrt2}).\smallskip

After some technical results inspired from recent achievements by J. Hamhalter, O. F.~K. Kalenda, H. Pfitzner, and the second author of this note in \cite{hamhalter2019mwnc}, we arrive to our main result in Theorem \ref{t metric charact of unitaries JBalgebras}, where we prove the following: Let $u$ be an extreme point of the closed unit ball of a unital JB$^*$-algebra $M$. Then the following statements are equivalent:
\begin{enumerate}[$(a)$]\item $u$ is a unitary tripotent;
\item The set $\mathcal{M}_{u} = \{e\in \partial_e(\mathcal{B}_M) : \|u\pm e\|\leq \sqrt{2} \}$ contains an isolated point.
\end{enumerate}

\section{Background on JB$^*$-algebras and JB$^*$-triples}\label{sec:topologically isolated}

Suppose $A$ is a unital C$^*$-algebra whose set of projections (i.e. symmetric idempotents) will be denoted by $\mathcal{P} (A)$. It is known that the distance from $1$ to any projection in $\mathcal{P} (A)\backslash \{1\}$ is $1$, that is, $\|1-q\|\in \{0,1\}$ for all $q\in \mathcal{P} (A)$. Suppose $p$ is a central projection in $A$. In this case, $A$ writes as the orthogonal sum of $ p A p$ and $(1-p) A (1-p)$, and every projection $q$ in $A$ is of the form $q = q_1 +q_2$, where $q_1\leq p$ and $q_2\leq 1-p$. Then it easily follows that $\| p - q \| = \max \{ \|p-q_1\|,\|q_2\|\}\in \{0,1\}$ for each $q\in \mathcal{P} (A)$, which shows that $p$ is isolated (in the norm topology) in $\mathcal{P}(A)$. An easy example of a non-isolated projection can be given with 2 by 2 matrices. It is known that every rank one projection in $M_2(\mathbb{C})$ can be written in the form $p = \left(
            \begin{array}{cc}
              t & \gamma\sqrt{t(1-t)} \\
              \overline{\gamma}\sqrt{t(1-t)} & 1-t \\
            \end{array}
          \right)$, where $\gamma\in \mathbb{C}$ with $|\gamma|=1$ and $t\in [0,1]$. The mapping $q:[0,1]\to \mathcal{P}(M_2(\mathbb{C}))$, $q(s) = \left(
            \begin{array}{cc}
              s & \gamma\sqrt{s(1-s)} \\
              \overline{\gamma}\sqrt{s(1-s)} & 1-s \\
            \end{array}
          \right)$ is continuous and shows that $p$ is non-isolated in $\mathcal{P}(M_2(\mathbb{C}))$.
The natural question is whether $p$ being isolated in $\mathcal{P} (A)$ implies that $p$ is central in $A$. This question has been explicitly treated by M. Mori in \cite[Proof of Lemma 3.1]{Mori2017}. The argument is as follows, suppose $p$ is isolated in $\mathcal{P} (A)$, for each $a=a^*$ in $A$, we consider the mapping $\omega: \mathbb{R}\to \mathcal{P} (A)$, $\omega (t) :=e^{it a} p e^{-it a},$ which is differentiable with $\omega (0) = p$. We deduce from the assumption on $p$ that $\omega$ must be constant, and thus taking derivative at $t=0$ we get $i a p - i p a =0$, which implies that $p$ commutes with every hermitian element in $A$. That is every isolated projection in $\mathcal{P} (A)$ is central in $A$. We gather this information in the next result. 

\begin{proposition}\label{p equivalences of being isolated} Let $p$ be a projection in a unital C$^*$-algebra $A$. Then the following statements are equivalent:
\begin{enumerate}[$(a)$]\item $p$ is {\rm(}norm{\rm)} isolated in $\mathcal{P}(A)$;
\item $p$ is a central projection in $A$;
\item $1-2 p$ is {\rm(}norm{\rm)} isolated in Symm$(A)$.
\end{enumerate}
\end{proposition}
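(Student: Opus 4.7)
The plan is to follow the circular scheme $(b)\Rightarrow (a)\Rightarrow (b)$ (both arguments are essentially in the preceding discussion) and then close the loop via the bicontinuous affine bijection between projections and symmetries to obtain $(a)\Leftrightarrow (c)$.

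For $(b)\Rightarrow (a)$, I would reproduce the decomposition argument given in the excerpt. Assuming $p$ is central, one writes $A = pAp \oplus (1-p)A(1-p)$ (an orthogonal direct sum of C$^*$-algebras). Any projection $q\in \mathcal{P}(A)$ then splits as $q = q_1 + q_2$ with $q_1 = p q p \leq p$ and $q_2 = (1-p)q(1-p) \leq 1-p$ (central projections commute with everything). Since $p - q_1$ lives in the corner $pAp$ and $q_2$ lives in the orthogonal corner, the norm factors as $\|p-q\| = \max\{\|p-q_1\|, \|q_2\|\}$. Each of $\|p-q_1\|$ and $\|q_2\|$ is the distance from $1$ (in the respective unital corner) to a projection, hence belongs to $\{0,1\}$. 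Therefore $\|p-q\|\in\{0,1\}$ for every $q\in \mathcal{P}(A)$, showing that $p$ is isolated.

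For $(a)\Rightarrow (b)$, I would invoke the differentiation trick already sketched in the text: for each self-adjoint $a\in A$, the curve $\omega_a:\mathbb{R}\to \mathcal{P}(A)$, $\omega_a(t)=e^{ita} p e^{-ita}$, is a norm-continuous path of projections with $\omega_a(0)=p$. If $p$ is isolated, then there is $\delta>0$ such that $\|\omega_a(t)-p\|<\delta$ forces $\omega_a(t)=p$ for all small $t$; by continuity $\omega_a$ is locally constant near $0$, so differentiating $\omega_a$ at $0$ gives $i(ap-pa)=0$, i.e.\ $p$ commutes with every self-adjoint element of $A$. Since $A=A_{sa}+iA_{sa}$, $p$ lies in the centre of $A$.

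For $(a)\Leftrightarrow (c)$, I would observe that the map $\Phi:\mathcal{P}(A)\to \mathrm{Symm}(A)$, $\Phi(p)=1-2p$, is a bijection onto $\mathrm{Symm}(A)$ (its inverse is $s\mapsto \tfrac12(1-s)$, which sends a self-adjoint symmetry to a projection), and it satisfies $\|\Phi(p)-\Phi(q)\|=2\|p-q\|$. Hence $\Phi$ is a bi-Lipschitz homeomorphism between $\mathcal{P}(A)$ and $\mathrm{Symm}(A)$ with the norm topologies, and isolated points correspond under $\Phi$. The only really delicate ingredient in the whole proposition is the $(a)\Rightarrow (b)$ step, where one needs the local constancy of the unitary-conjugation orbit to pass from an algebraic isolation condition to a derivative vanishing; the rest is bookkeeping.
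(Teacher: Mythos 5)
Your proposal is correct and follows essentially the same route as the paper: the central-decomposition argument for $(b)\Rightarrow(a)$, Mori's differentiation trick with $\omega_a(t)=e^{ita}pe^{-ita}$ for $(a)\Rightarrow(b)$, and the affine correspondence $p\mapsto 1-2p$ for $(a)\Leftrightarrow(c)$. The paper's own proof is just a compressed version of the same three steps, citing the preceding discussion for the first two.
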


\begin{proof} The implication $(a)\Rightarrow (b)$ is proved in \cite[Proof of Lemma 3.1]{Mori2017}, while $(b)\Rightarrow (a)$ has been commented before. Finally it is easy to see that a sequence $(q_n)\subseteq \mathcal{P}(A)\backslash \{p\}$ converges in norm to $p$ if and only if the sequence  $(1-2 q_n)\subseteq \hbox{Symm}(A)\backslash \{1-2p\}$ converges in norm to $1-2 p$.
\end{proof}

A Jordan version of Proposition \ref{p equivalences of being isolated} was considered by J.D.M. Wright and M.A. Youngson in \cite{WriYou78}. Before going into details, let us note that the lacking of associativity for the product of a JB$^*$-algebra makes invalid the arguments presented above, and specially the use of products of the form $e^{it a} p e^{-it a}$ is not always possible in the Jordan analogue of $(a)\Rightarrow (b)$.\smallskip

In our approach to the Jordan setting, JB$^*$-algebras and JB-algebras will be regarded as JB$^*$-triples and real JB$^*$-triples, respectively. According to the original definition, introduced by W. Kaup in \cite{Ka83}, a JB$^*$-triple is a complex Banach space $E$ equipped with a continuous triple product $\J ... : E\times E\times E \to E,$ $(a,b,c)\mapsto \{a,b,c\},$ which is bilinear and symmetric in $(a,c)$ and conjugate linear in $b$,
and satisfies the following axioms for all $a,b,x,y\in E$:
\begin{enumerate}[{\rm (a)}] \item $L(a,b) L(x,y) = L(x,y) L(a,b) + L(L(a,b)x,y)
	- L(x,L(b,a)y),$ where $L(a,b):E\to E$ is the operator defined by $L(a,b) x = \J abx;$
\item $L(a,a)$ is a hermitian operator with non-negative spectrum;
\item $\|\{a,a,a\}\| = \|a\|^3$.\end{enumerate}

Examples of JB$^*$-triples include all C$^*$-algebras and JB$^*$-algebras with triple products of the form \begin{equation}\label{eq product operators} \J xyz =\frac12 (x y^* z +z y^*
x),\end{equation}  and \begin{equation}\label{eq product jordan}\J xyz = (x\circ y^*) \circ z + (z\circ y^*)\circ x -
(x\circ z)\circ y^*, \end{equation} respectively. \smallskip

Given an element $x$ in a JB$^*$-triple $E$, we shall write $x^{[1]} := x$, $x^{[3]} := \J xxx$, and $x^{[2n+1]} := \J xx{x^{[2n-1]}},$ $(n\in \NN)$.\smallskip

Analogously, as real C$^*$-algebras are defined as real norm closed hermitian subalgebras of C$^*$-algebras (cf. \cite{Li2003}), a real closed subtriple of a JB$^*$-triple is called a \emph{real JB$^*$-triple} (see \cite{IsKaRo95}). Every JB$^*$-triple is a real JB$^*$-triple when it is regarded as a real Banach space. In particular every JB-algebra is a real JB$^*$-triple with the triple product defined in \eqref{eq product jordan} (see \cite{IsKaRo95}).\smallskip

An element $e$ in a real or complex JB$^*$-triple $E$ is said to be a \emph{tripotent} if $\J eee=e$. 
Each tripotent $e\in E$, determines a decomposition of $X,$ known as the \emph{Peirce decomposition}\label{eq Peirce decomposition} associated with $e$, in the form $$E= E_{2} (e) \oplus E_{1} (e) \oplus E_0 (e),$$ where $E_j (e)=\{ x\in E : \J eex = \frac{j}{2}x \}$
for each $j=0,1,2$.\smallskip

Triple products among elements in the Peirce subspaces satisfy the following \emph{Peirce arithmetic}: $\J {E_{i}(e)}{E_{j} (e)}{E_{k} (e)}\subseteq E_{i-j+k} (e)$ if $i-j+k \in \{ 0,1,2\},$ and $\J {E_{i}(e)}{E_{j} (e)}{E_{k} (e)}=\{0\}$ otherwise, and $$\J {E_{2} (e)}{E_{0}(e)}{E} = \J {E_{0} (e)}{E_{2}(e)}{E} =0.$$ Consequently, each Peirce subspace $E_j(e)$ is a real or complex JB$^*$-subtriple of $E$.\smallskip

The projection $P_{k_{}}(e)$ of $E$ onto $E_{k} (e)$ is called the \emph{Peirce $k$-projection}. It is known that Peirce projections are contractive (cf. \cite[Corollary 1.2]{FriRu85}) and determined by the following identities $P_{2}(e) = Q(e)^2,$ $P_{1}(e) =2(L(e,e)-Q(e)^2),$ and $P_{0}(e) =\hbox{Id}_E - 2 L(e,e) + Q(e)^2,$ where $Q(e):E\to E$ is the conjugate or real linear map defined by $Q(e) (x) =\{e,x,e\}$. A tripotent $e$ in $E$ is called \emph{unitary} (respectively, \emph{complete} or \emph{maximal}) if $E_2 (e) = E$ (respectively, $E_0 (e) =\{0\}$). This definition produces no contradiction because unitary elements in a unital JB$^*$-algebra are precisely the unitary tripotents in $M$ when the latter is regarded as a JB$^*$-triple (cf. \cite[Proposition 4.3]{BraKaUp78}). A tripotent $e$ in $X$ is called \emph{minimal} if $E_2(e)=\CC e \neq \{0\}$. 
The set of all tripotents (respectively, of all complete tripotents) in a JB$^*$-triple $E$ will be denoted by $\hbox{Trip}(E)$ (respectively, $\hbox{Trip}_{max}(E)$).\smallskip

It is worth remarking that if $E$ is a complex JB$^*$-triple, the Peirce 2-subspace $E_2 (e)$ is a unital JB$^*$-algebra with unit $e$,
product $x\circ_e y := \J xey$ and involution $x^{*_e} := \J exe$, respectively.\label{eq Peirce-2 is a JB-algebra} 
\smallskip

Let us recall that a couple of elements $a,b$ in a real or complex JB$^*$-triple $E$ are called \emph{orthogonal} (written $a\perp b$) if $L(a,b) =0$. It is known that $a\perp b$ $\Leftrightarrow$ $\J aab =0$ $\Leftrightarrow$ $\{b,b,a\}=0$ $\Leftrightarrow$ $b\perp a$. If $e$ is a tripotent in $E$, it follows from Peirce rules that $a\perp b$ for every $a\in E_2(e)$ and every $b\in E_0(e)$.
Two projections $p,q$ in a JB$^*$-algebra are orthogonal if and only if $ p\circ q = 0$. An additional geometric property of orthogonal elements shows that $\|a\pm b\| = \max\{\|a\|,\|b\|\}$ whenever $a$ and $b$ are orthogonal elements in a real or complex JB$^*$-triple (cf. \cite[Lemma 1.3]{FriRu85}).\smallskip

Henceforth the set, $\hbox{Trip} (E)$, of all tripotents in a JB$^*$-triple $E$, will be equipped with the natural partial order\label{eq partial order tripotents} defined by $u \leq e$ in $\hbox{Trip} (E)$ if $e-u$ is a tripotent in $E$ with $e-u \perp u$, equivalently, if $u$ is a projection in the JB$^*$-algebra $E_2(e)$.\smallskip

One of the useful geometric properties of a real or complex JB$^*$-triple, $E$, asserts that the extreme points of its closed unit ball, $\mathcal{B}_E,$ are precisely the complete tripotents in $E$, that is, \begin{equation}\label{eq extrem points complete tripotents} \partial_e(\mathcal{B}_E) = \hbox{Trip}_{max}(E)
\end{equation} (cf. \cite[Proposition 3.5]{KaUp77} and \cite[Lemma 3.3]{IsKaRo95}). 
\smallskip

Let $a$ be a hermitian element in a JB$^*$-algebra $M$, the spectral theorem \cite[Theorem 3.2.4]{HOS} assures that the JB$^*$-subalgebra of $M$ generated by $a$ is isometrically JB$^*$-isomorphic to a commutative C$^*$-algebra. In particular, we can write $a$ as the difference of two orthogonal positive elements in $M_{sa}$. By applying this result it can be seen that every tripotent in $M_{sa}$ is the difference of two orthogonal projections in $M$, and furthermore, when $M$ is unital we obtain \begin{equation}\label{eq complete tripotents are symmetries} \partial_e (\mathcal{B}_{M_{sa}}) =\hbox{Symm} (M) = \{s\in M_{sa} : s^2 = 1 \}
\end{equation} (cf. \cite{WriYou78} or \cite[Proposition 3.1.9]{Cabrera-Rodriguez-vol1}). As in the associative case, the symbol $\mathcal{P}(M)$ will stand for the set of all projections (i.e., self-adjoint idempotents) in a JB$^*$-algebra $M$.\smallskip

The next result, which is a Jordan version of Proposition \ref{p equivalences of being isolated}, was originally established in \cite[Proposition 1.3]{IsRo95}, and a new proof can be consulted in \cite[Proposition 3.1.24 and Remark 3.1.25]{Cabrera-Rodriguez-vol1}. An alternative proof, based on the structure of real JB$^*$-triples, is included here for the sake of completeness.

\begin{proposition}\label{p equivalences of being isolated JB*-algebras}\cite[Proposition 1.3]{IsRo95}, \cite[Proposition 3.1.24]{Cabrera-Rodriguez-vol1}
Let $p$ be a projection in a unital JB$^*$-algebra $M$.
Then the following statements are equivalent:
\begin{enumerate}[$(a)$]\item $p$ is {\rm(}norm{\rm)} isolated in $\mathcal{P}(M)$;
\item $p$ is a central projection;
\item $1-2 p$ is {\rm(}norm{\rm)} isolated in Symm$(M)$.
\end{enumerate}
\end{proposition}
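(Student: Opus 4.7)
The plan is to prove the three implications $(a)\Leftrightarrow(c)$, $(b)\Rightarrow(a)$, and $(a)\Rightarrow(b)$. The equivalence $(a)\Leftrightarrow(c)$ is straightforward: the affine bijection $q\mapsto 1-2q$ between $\mathcal{P}(M)$ and $\hbox{Symm}(M)$ satisfies $\|(1-2q)-(1-2p)\|=2\|q-p\|$, so $p$ is (norm) isolated in $\mathcal{P}(M)$ exactly when $1-2p$ is isolated in $\hbox{Symm}(M)$. For $(b)\Rightarrow(a)$ I would exploit that centrality of $p$ forces $M_1(p)=0$ and hence $M=M_2(p)\oplus M_0(p)$ as a JB$^*$-triple orthogonal direct sum; every projection $q\in M$ then splits as $q=q_2+q_0$ with $q_j$ a projection in the JB$^*$-subalgebra $M_j(p)$, and the identity $\|a\pm b\|=\max\{\|a\|,\|b\|\}$ for orthogonal elements in a JB$^*$-triple \cite[Lemma 1.3]{FriRu85}, combined with the fact that projections have norm $0$ or $1$, gives $\|p-q\|\in\{0,1\}$, so $p$ is isolated.

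The substantive direction is $(a)\Rightarrow(b)$, and here the plan is to use the real JB$^*$-triple structure of $J:=M_{sa}$. Arguing contrapositively, suppose $p$ is not central, so $M_1(p)\neq 0$; since $M_1(p)$ is $*$-invariant one can choose $a\in M_1(p)\cap M_{sa}$ with $a\neq 0$. Put $s:=1-2p\in\hbox{Symm}(M)$ and consider the operator $\delta:=L(a,s)-L(s,a)$ on $J$. By the standard fact that $L(x,y)-L(y,x)$ is a triple derivation of any (real) JB$^*$-triple, $\delta$ is a triple derivation of $J$. Expanding $\{a,s,x\}-\{s,a,x\}$ and using the commutativity of $\circ$ together with $a\circ s=a\circ(1-2p)=a-2(p\circ a)=0$ (since $a\in M_1(p)$) yields
\[
\delta(s)=2\bigl(s^2\circ a-(a\circ s)\circ s\bigr)=2a\neq 0.
\]

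Next, $\Phi_t:=\exp(t\delta)$ is a norm-continuous one-parameter group of real-linear triple automorphisms of $J$. A short calculation from $s^2=1$ gives $\{s,s,x\}=x$ for every $x\in J$, so $J_0(s)=\{0\}$, i.e., $s$ is a complete tripotent of $J$; since triple automorphisms preserve Peirce decompositions, each $s_t:=\Phi_t(s)$ is a complete tripotent of $J$, and by \eqref{eq complete tripotents are symmetries} it follows that $s_t\in\hbox{Symm}(M)$. The first-order expansion $s_t=s+2ta+O(t^2)$ forces $s_t\neq s$ for small $t\neq 0$, while $s_t\to s$ as $t\to 0$, so $s$ fails to be isolated in $\hbox{Symm}(M)$, which via $(a)\Leftrightarrow(c)$ contradicts the isolation of $p$ in $\mathcal{P}(M)$. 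The main obstacle I foresee is the careful invocation of the real JB$^*$-triple machinery—that $L(a,s)-L(s,a)$ is a triple derivation of $M_{sa}$, that its exponential integrates to a norm-continuous group of triple automorphisms, and that such automorphisms send complete tripotents to complete tripotents—each of which is standard in the theory but should be referenced precisely in the final write-up.
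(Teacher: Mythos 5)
Your proof is correct and follows essentially the same route as the paper: the equivalence $(a)\Leftrightarrow(c)$ and the implication $(b)\Rightarrow(a)$ are handled identically, and the main implication in both arguments hinges on the one-parameter group $\exp\big(t(L(a,b)-L(b,a))\big)$ of triple automorphisms of $M_{sa}$, which carries the complete tripotent $1-2p$ to symmetries of $M$. The only (cosmetic) difference is that the paper argues directly --- isolation forces the curve $t\mapsto\Phi_t^{a,b}(1-2p)$ to be constant for all $a,b\in M_{sa}$, and the vanishing derivative at $t=0$ yields the operator-commutation identity --- whereas you argue contrapositively with the explicit witness $a\in M_1(p)\cap M_{sa}$, $b=1-2p$, for which $\delta(1-2p)=2a\neq 0$.
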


\begin{proof} The equivalence $(c)\Leftrightarrow (a)$ follows by the same arguments employed in the case of C$^*$-algebras.\smallskip

$(c)\Rightarrow (b)$ Suppose $1-2 p$ is (norm) isolated in Symm$(M)$. We consider $M_{sa}$ as a real JB$^*$-triple. Given $a,b\in M_{sa}$, by the axioms in the definition of JB$^*$-triples,  the mapping $$\Phi_t^{a,b} =\exp(t (L(a,b)-L(b,a)) = \sum_{n=0}^{\infty} \frac{t^n}{n!}(L(a,b)-L(b,a))^ n : M\to M$$ is a surjective linear isometry for all $t\in \mathbb{R}$, and clearly maps $M_{sa}$ into itself. One of the starring results in the theory of JB$^*$-triples asserts that every surjective linear isometry between JB$^*$-triples is a triple isomorphism (cf. \cite[Proposition 5.5]{Ka83}). Therefore $\Phi_t^{a,b}$ and $\Phi_t^{a,b}|_{M_{sa}} : M_{sa}\to M_{sa}$ are (isometric) triple isomorphisms. Since $1-2 p$ is an extreme point of the closed unit ball of $M_{sa}$, we deduce that $\Phi_t^{a,b}(1-2p)$ must be an extreme point of the closed unit ball of $M_{sa}$, and hence a complete tripotent in $M_{sa}$, or equivalently, a symmetry in $M$. Therefore the mapping $\omega :\mathbb{R}\to \hbox{Symm}(M)$, $t\mapsto \omega(t) =\Phi_t^{a,b}(1-2p)$ is differentiable with $\omega(0) = 1-2 p$. Since $1-2p$ is isolated in Symm$(M)$, the mapping $\omega(t) $ must be constant, and thus, by taking derivative at $t=0$ we get $$ 0 = (L(a,b)-L(b,a)) (1-2 p) = \{a,b,1-2 p\} - \{b,a,1-2p \},$$ equivalently, $$((1-2p)\circ a)\circ b = ((1-2p)\circ b)\circ a,$$ for all $a,b\in M_{sa}$ (and for all $a,b\in M$). This shows that $1-2p$ (and hence $p$) lies in the center of $M$ as desired.\smallskip

$(b)\Rightarrow (a)$ If $p$ is a central projection in $M$, we know from \cite[Lemma 2.5.5]{HOS} that $M= U_p(M) \oplus U_{1-p} (M)$, where for each $z\in M$, $U_z(x) = \{z,x^*,z\} = 2 (z\circ x) \circ z - z^2 \circ x$ ($\forall x\in M$). We further know that every element in $U_p(M)$ is orthogonal to every element in $U_{1-p} (M)$. Arguing as in the associative case (see Proposition \ref{p equivalences of being isolated} above), we prove that for each projection $q$ in $M$ we have $\|p-q\|\in \{0,1\}$, which concludes the proof.
\end{proof}

\section{Metric characterisation of unitaries}

Let us revisit some of the arguments in the proof of \cite[Lemma 3.1]{Mori2017} under the point of view of Jordan algebras.

\begin{proposition}\label{p MO for Cstar algebras from a Jordan point of view} Let $e$ be a maximal partial isometry in a unital C$^*$-algebra $A$, and let $l= ee^*$ and $r=e^* e$ denote the left and right projections of $e$. Suppose we can find two orthogonal projections $p,q\in A$ such that  $l = p + q$. Then the element $y = i (p-q)e$ lies in $\mathcal{A}_{e}= \left\{y\in \partial_e(\mathcal{B}_A) : \|e\pm y\|= \sqrt{2} \right\}$, and for each $\theta\in \mathbb{R}$ the element $$y_{\theta} := P_2(e^*) (y) + \cos(\theta) P_1 (e^*) (y) + \sin(\theta) P_1(e^*) (1)$$ is a maximal partial isometry in $A$.\smallskip

If we further assume that $p$ and $q$ are central projections in $l A l$, the following statements hold:
\begin{enumerate}[$(a)$]\item The elements $ p^\prime = e p e^*$ and $q^\prime = e q e^*$ are two orthogonal central projections in $r A r$, with $r= p^\prime + q^\prime$;
\item Suppose that $e$ is not unitary in $A$, and take $y = i (p-q)e$. Then $y$ lies in $\mathcal{A}_{e}$, and for each $\theta\in \mathbb{R}$ the element $y_{\theta} := P_2(e^*) (y) + \cos(\theta) P_1 (e^*) (y) + \sin(\theta) P_1(e^*) (1)$ is a maximal partial isometry in $A$ with  $\|e\pm y_{\theta}\| = \sqrt{2}$ {\rm(}actually, $\frac{e\pm y_{\theta}}{\sqrt{2}}$ is a maximal partial isometry in $A${\rm)}, and $y_{\theta}\neq y$ for all $\theta$ in $\mathbb{R}\backslash \left(2 \pi \mathbb{Z}\cup \pi \frac{1+2 \mathbb{Z}}{2}\right)$. Furthermore, $\|y-P_2(y)(y_{\theta})\|\leq 1-\cos(\theta)$, and hence $P_2(y)(y_{\theta})$ is invertible in $A_2(y)$ for $\theta$ close to zero.
\end{enumerate}
\end{proposition}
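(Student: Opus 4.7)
I would first verify $y = i(p-q)e \in \mathcal{A}_e$ by direct computation. Since $(p-q)^2 = p+q = l$ and $l(p-q) = (p-q) = (p-q)l$, one finds $yy^* = (p-q)\,l\,(p-q) = l$ and $y^*y = e^*(p-q)^2 e = e^*le = r$, so $y$ is a partial isometry sharing the left and right projections of $e$; the maximality condition $(1-l)A(1-r) = \{0\}$ then transfers verbatim, and \eqref{eq extrem points complete tripotents} gives $y \in \partial_e(\mathcal{B}_A)$. Expanding $(e \pm y)(e \pm y)^* = ee^* \pm ey^* \pm ye^* + yy^*$, the cross terms $\pm il(p-q) \mp i(p-q)l$ cancel and only $2l$ survives, so $\|e \pm y\|^2 = 2$. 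I would also record here the key identity obtained from the maximality hypothesis applied at $x = 1$: $(1-l)(1-r) = 0$, hence $lr = l + r - 1 = rl$, so \emph{the projections $l$ and $r$ commute} and $lr$ is itself a projection. This commutation is the backbone of what follows.

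\textbf{Stage 2.} To show $y_\theta$ is a maximal partial isometry for every $\theta \in \mathbb{R}$, I plan to construct a self-adjoint $h \in A$ such that $v_\theta := \exp(i\theta h)$ is a unitary with $v_\theta y = y_\theta$. Matching Taylor expansions of $v_\theta y$ and of $y_\theta = P_2(e^*)(y) + \cos(\theta) P_1(e^*)(y) + \sin(\theta) P_1(e^*)(1)$ at $\theta = 0$ translates into
\[
hy = -iP_1(e^*)(1),\quad h\,P_1(e^*)(1) = iP_1(e^*)(y),\quad h\,P_1(e^*)(y) = -iP_1(e^*)(1),
\]
with $h$ annihilating the Peirce-$2$ part; such an $h$ can be built as an off-diagonal self-adjoint operator relative to the block decomposition of $A$ furnished by the three pairwise orthogonal commuting projections $lr$, $l(1-r)$, $(1-l)r$. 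Since unitary left multiplication preserves maximal partial isometries, $y_\theta = v_\theta y$ is itself maximal. Equivalently, one can exhibit the one-parameter group of surjective linear isometries $\Phi_\theta = \exp(\theta(L(a,b) - L(b,a)))$ for a judicious $a,b$ built from $e^*$ and $1$ and invoke Kaup's theorem, as in the proof of Proposition~\ref{p equivalences of being isolated JB*-algebras}. This step is the main obstacle of the whole proposition: making the construction of $h$ (equivalently, of the triple automorphism) explicit and uniform over all maximal partial isometries $e$.

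\textbf{Stage 3 (centrality).} For (a): the map $\phi : lAl \to rAr$, $\phi(a) = e^*ae$, is a $*$-iso\-mor\-phism with inverse $b \mapsto ebe^*$ (using $ee^*e = e$, $ee^* = l$, $e^*e = r$), preserving projections, orthogonality and centres, so $\phi(p) = e^*pe$ and $\phi(q) = e^*qe$ are orthogonal central projections in $rAr$ summing to $\phi(l) = r$. (The literal formula $p' = epe^*$ in the statement produces elements of $lAl$ summing to $l$, which I read as a typographical slip.) For (b): that $y_\theta$ is a maximal partial isometry is Stage 2. Centrality yields the orthogonal decomposition $e = pe + qe$ (with $(pe)(qe)^* = plq = 0$); the parallel decomposition of $y_\theta$ then gives $(e \pm y_\theta)(e \pm y_\theta)^* = 2l_\theta$ for a projection $l_\theta$, so $\|e \pm y_\theta\| = \sqrt{2}$, and checking also that $(e \pm y_\theta)^*(e \pm y_\theta)/2$ is a projection shows that $(e \pm y_\theta)/\sqrt{2}$ is itself a maximal partial isometry. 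From $y_\theta - y = (\cos\theta - 1)P_1(e^*)(y) + \sin\theta\,P_1(e^*)(1)$ and $P_1(e^*)(1) = 1 - rl \neq 0$ (non-unitarity), $y_\theta = y$ forces the listed exceptional values of $\theta$. Finally, since $yy^* = l = ee^*$ and $y^*y = r = e^*e$, we have $P_2(y) = P_2(e)$ as operators on $A$; the fact that $lr$ is a projection gives $P_2(e)(P_1(e^*)(1)) = lr - (lr)^2 = 0$ and $P_2(e)(P_1(e^*)(y)) = P_1(e^*)(y)$, so
\[
y - P_2(y)(y_\theta) \;=\; P_2(e)(y - y_\theta) \;=\; (1 - \cos\theta)\,P_1(e^*)(y),
\]
and contractivity of the Peirce projections yields $\|y - P_2(y)(y_\theta)\| \leq (1 - \cos\theta)\|P_1(e^*)(y)\| \leq 1 - \cos\theta$. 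As $y$ is the unit of the JB$^*$-algebra $A_2(y)$, a Neumann-series argument then makes $P_2(y)(y_\theta)$ invertible in $A_2(y)$ for $\theta$ close to $0$.
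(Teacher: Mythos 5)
Your Stage 1 and the closing norm estimate of Stage 3 are correct (and your reading of $p'=epe^*$ as a slip for $e^*pe$ matches what the paper's proof actually uses), but Stage 2 --- which you yourself flag as the main obstacle --- is not merely unfinished: the device you propose cannot work. If $y_\theta=v_\theta y$ for a unitary $v_\theta\in A$, then $y_\theta^*y_\theta=y^*v_\theta^*v_\theta y=y^*y$ for every $\theta$, so the right support of $y_\theta$ would be constant. It is not. Take $A=B(\ell^2)$, $e=S^*$ the adjoint of the unilateral shift (so $l=1$, $r=1-e_{00}$, and $e$ is a non-unitary maximal partial isometry), $p=1$, $q=0$, $y=iS^*$, with $e_{jk}$ the standard matrix units; then $y_\theta=iS^*+i(\cos\theta-1)e_{01}+\sin\theta\, e_{00}$ and a direct computation gives $y_{\pi/2}^*\,y_{\pi/2}=1-e_{11}\neq 1-e_{00}=y^*y$. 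Hence no self-adjoint $h\in A$ can satisfy your three Taylor relations here: left multiplication by a unitary works precisely when $e$ is an isometry, right multiplication when it is a co-isometry, and in general one needs a genuinely two-sided implementation. Your fallback $\Phi_\theta=\exp(\theta(L(a,b)-L(b,a)))$ (which in a C$^*$-algebra is $x\mapsto e^{\theta c}xe^{\theta d}$ for skew-adjoint $c,d$) is therefore \emph{not} equivalent to the unitary picture, and you do not construct $a,b$ either.

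The paper avoids all of this with a tool you are missing: by \cite[Lemma 6.1]{hamhalter2019mwnc} there is an isometric unital Jordan $*$-monomorphism $\Psi:A\to B(H)$ with $\Psi(e)^*\Psi(e)=1$. Since $\Psi$ preserves norms, triple products, Peirce projections and extreme points, one may assume $r=1$; then $y_\theta=yl+\cos\theta\,y(1-l)+\sin\theta(1-l)$, and the single identity $y_\theta^*y_\theta=1$ settles maximality, while a nine-term expansion of $(e\pm y_\theta)^*(e\pm y_\theta)=2$ settles $\|e\pm y_\theta\|=\sqrt2$. Without that reduction, your Stage 3(b) identities in the corner calculus of $l$, $r$, $lr$ are only asserted, and your argument that $y_\theta\neq y$ off the exceptional set is incomplete: from $y_\theta-y=(\cos\theta-1)P_1(e^*)(y)+\sin\theta\,P_1(e^*)(1)$ you must still exclude a linear dependence between $P_1(e^*)(y)$ and $P_1(e^*)(1)$ (note $\theta=\pi$ lies outside the exceptional set, so at the very least you need $P_1(e^*)(y)\neq0$, which requires an argument from non-unitarity). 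The paper instead computes $(y-y_\theta)^*(y-y_\theta)=2(1-\cos\theta)\left((1-l)-\sin(\theta)a\right)$ with $a$ hermitian of norm at most one and reads off nonvanishing directly; I would adopt that route, or first prove a Jordan-embedding reduction of your own.
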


\begin{proof} Let us prove the first statement. Clearly, $y = i (p-q)e$ lies in $\mathcal{A}_{e}$. By \cite[Lemma 6.1]{hamhalter2019mwnc} there exist a complex Hilbert space $H$ and an isometric unital Jordan $*$-monomorphism $\Psi:A\to B(H)$ such that $\Psi(e)^*\Psi(e)=1$. Let us denote $v = \Psi(e)$, $z = \Psi(y)$, and $z_{\theta} = \Psi(y_{\theta})$. We observe that $$z_{\theta} = P_2(v^*) (z) + \cos(\theta) P_1 (v^*) (z) + \sin(\theta) P_1(v^*) (1),$$ because $\Psi$ is a unital Jordan $^*$-monomorphism, and hence it preserves triple products and involution. Clearly, $v= \Psi (e)$ is a maximal partial isometry (actually, an isometry $v^* v =1$) in $B(H)$. We shall write $B$ for $B(H)$. Having the above properties in mind we can rewrite $ z_{\theta}$ in the form
$$ z_{\theta} = v^*v z v v^* + \cos(\theta) \left( (1-v^*v) z v v^*  + v^*v z (1-v v^*) \right) $$ $$+ \sin(\theta) \left( (1-v^*v) 1 v v^*  + v^*v 1 (1-v v^*) \right)$$
$$ =  z v v^* + \cos(\theta) z (1-v v^*) + \sin(\theta) (1-v v^*).$$ Let us observe that the latter expression already appears in the proof of \cite[Lemma 3.1]{Mori2017}. 
\smallskip

Let us examine the element $z_{\theta}$ more closely. It follows from the properties commented above that
$$\begin{aligned}  z_{\theta}^* z_{\theta} &= v v^* z^* z v v^* + \cos(\theta) v v^* z^*  z (1-v v^*) + \sin(\theta) v v^* z^*  (1-v v^*) \\
&+ \cos(\theta)  (1-v v^*) z^* z v v^* + \cos^2(\theta)  (1-v v^*) z^*  z (1-v v^*) \\ &+ \cos(\theta)  \sin(\theta)  (1-v v^*) z^*  (1-v v^*) +\sin(\theta) (1-v v^*) z v v^* \\
&+ \sin(\theta)  \cos(\theta) (1-v v^*) z (1-v v^*) + \sin^2(\theta) (1-v v^*)\\
&= v v^* + \cos^2(\theta)  (1-v v^*) + \sin^2(\theta) (1-v v^*) = vv^* =1,
\end{aligned}$$ witnessing that $z_{\theta}$ is an isometry in $B$. It then follows from the properties of $\Psi$ that $y_{\theta} = \Psi^{-1} \left(\Psi(y_{\theta})\right)\in \partial_{e} (\mathcal{B}_{A})$ is a complete tripotent in $A$.\smallskip

Concerning the second statement, let us analyze the element $w=e\pm y_{\theta}$. As before, up to an application of \cite[Lemma 6.1]{hamhalter2019mwnc}, we can suppose that $r=e^* e =1$. We set $l = ee^*$. Assuming that $e$ is not unitary the projection $1-l=1-ee^*$ is not zero. We therefore have $$ w= e\pm y_{\theta} = (e\pm y ) l  + (e\pm \cos(\theta) y) (1-l) +\sin(\theta) (1-l),$$ and we shall compute $w^* w$.\smallskip

$(a)$ Let us make some observations. The mappings $\Phi_1: l A l\to r A r$,  $x\mapsto e^* x e$ and $\Phi_2 :r A r\to l A l$, $y\mapsto e y e^*$ are well defined, linear, and contractive. It is easy to see that $x = l x l = e (e^* x e) e^*= \Phi_2 \Phi_1 (x)$ and $y = e^* (e y e^*) e=  \Phi_1 \Phi_2 (y)$, for all $x\in l A l$ and $y\in r A r$. Therefore  $\Phi_2$ and $\Phi_1$ are linear bijections and inverses each other. Furthermore, for all $x,z\in l A l$, we have $ \Phi_1 (x)  \Phi_1 (z)=  (e^* x e) (e^* z e^*) = e (x z) e^* =  \Phi_1 (x z)$, and $\Phi_1 (x)^*= (e^* x e)^* = e^* x^* e=  \Phi_1 (x^*)$, for all $x\in l A l$, which shows that the first mapping is a unital C$^*$-isomorphism. Then the elements $ p^\prime = \Phi_1 (p)$ and $q^\prime = \Phi_1 (q)$ are two orthogonal central projections in $r A r=A$ with $1=r= p^\prime + q^\prime$.\smallskip

$(b)$ We derive from the above that $p e = e p^\prime,$ and $q e = e q^{\prime},$ essentially because $pe \perp eq^{\prime}$ and $qe\perp ep^{\prime}$. Consequently, $$y = i (p-q) e= i e (p^\prime -q^\prime), \ e\pm y = e ( \mu_{\pm} p^{\prime} + \overline{\mu_{\pm}} q^{\prime} ),$$ $$\hbox{ and } e\pm \cos(\theta) y = e\left(\lambda_{\pm} p^{\prime} + \overline{\lambda_{\pm}} q^{\prime}\right),$$ where $\mu_{\pm} =1\pm i$, and $\lambda_{\pm}=1\pm i \cos(\theta)$. We study next all summands involved in the product $w^* w$:

$$\begin{aligned}((x\pm y)l )^* ((x\pm y)l ) &= l (x\pm y)^* (x\pm y) l  =l  ( \overline{\mu_{\pm}} p^{\prime} + {\mu_{\pm}} q^{\prime} ) e^* e ( \mu_{\pm} p^{\prime} + \overline{\mu_{\pm}} q^{\prime} )\\
&= 2 l (p^{\prime} +q^{\prime}) l = 2 l ;\\
\sin(\theta) ((x\pm y)l )^* (1-l) &= \sin(\theta)  l  ( \overline{\mu_{\pm}} p^{\prime} + {\mu_{\pm}} q^{\prime} ) e^*  (1-l) =0;
\end{aligned}$$
$$\begin{aligned}((x\pm y)l )^* (e\pm \cos(\theta) y) (1-l)&= l  ( \overline{\mu_{\pm}} p^{\prime} + {\mu_{\pm}} q^{\prime} ) e^* e \left(\lambda_{\pm} p^{\prime} + \overline{\lambda_{\pm}} q^{\prime}\right) (1-l) \\
&= l  ( \lambda_{\pm} \overline{\mu_{\pm}} p^{\prime} + \overline{\lambda_{\pm}} {\mu_{\pm}} q^{\prime} )  (1-l);\\
(1-l)  (e\pm \cos(\theta) y)^* (x\pm y) l &= (1-l)  \left(\overline{\lambda_{\pm}} p^{\prime} + {\lambda_{\pm}} q^{\prime}\right) e^* e ( \mu_{\pm} p^{\prime} + \overline{\mu_{\pm}} q^{\prime} ) l\\
&= (1-l)  \left(\overline{\lambda_{\pm}} \mu_{\pm} p^{\prime} + {\lambda_{\pm}} \overline{\mu_{\pm}} q^{\prime}\right) l;
\end{aligned} $$
$$ (1-l)  (e\pm \cos(\theta) y)^* (e\pm \cos(\theta) y) (1-l) = (1-l)  \left(\overline{\lambda_{\pm}} p^{\prime} + {\lambda_{\pm}} q^{\prime}\right) e^* e \left(\lambda_{\pm} p^{\prime} + \overline{\lambda_{\pm}} q^{\prime}\right) (1-l) $$ $$=(1-l)  \left(\overline{\lambda_{\pm}} p^{\prime} + {\lambda_{\pm}} q^{\prime}\right)\left(\lambda_{\pm} p^{\prime} + \overline{\lambda_{\pm}} q^{\prime})\right) (1-l)= |\lambda_{\pm}|^2 (1-l)  \left( p^{\prime} + q^{\prime}\right) (1-l)$$ $$ =(1+\cos^2(\theta)) (1-l);$$
$$\begin{aligned}((e\pm \cos(\theta) y) (1-l))^* (\sin(\theta) (1-l))&=\sin(\theta) (1-l)  \left(\overline{\lambda_{\pm}} p^{\prime} + {\lambda_{\pm}} q^{\prime}\right) e^* (1-l)=0;\\
(\sin(\theta) (1-l))^* (x\pm y) l &= \sin(\theta) (1-l) e ( \mu_{\pm} p^{\prime} + \overline{\mu_{\pm}} q^{\prime} )l =0;\\
(\sin(\theta) (1-l))^* (e\pm \cos(\theta) y) (1-l) &=  \sin(\theta) (1-l) e \left(\lambda_{\pm} p^{\prime} + \overline{\lambda_{\pm}} q^{\prime}\right) (1-l) =0;\\
(\sin(\theta) (1-l))^* (\sin(\theta) (1-l)) &= \sin^2(\theta) (1-l) .
\end{aligned}$$

By adding the previous nine identities, and having in mind that $p^\prime$ and $q^\prime$ are central projections, we get
$$\frac{w^* w}{2} =  l + \frac12 (1+\cos^2(\theta)) (1-l) + \frac12 \sin^2(\theta) (1-l)$$ $$ + \frac12 l  ( \alpha p^{\prime} + \overline{\alpha} q^{\prime} )  (1-l) + \frac12 (1-l)  \left(\overline{\alpha} p^{\prime} + {\alpha} q^{\prime}\right) l = 1,$$ which proves that $\frac{w}{\sqrt{2}} = \frac{e\pm y_{\theta}}{\sqrt{2}}$ is an isometry, and consequently, $\|e\pm y_{\theta}\| = \sqrt{2}.$\smallskip

Let us now check that $y_{\theta}\neq y$ for all $\theta$ in $\mathbb{R}\backslash \left(2 \pi \mathbb{Z}\cup \pi \frac{1+2 \mathbb{Z}}{2}\right)$. Note that $l\neq 1$. Since $$\begin{aligned}(y-y_{\theta})^* (y-y_{\theta}) &= (1-\cos(\theta))^2 (1-l) y^*y (1-l) + \sin^2(\theta) (1-l) \\
& - (1-\cos(\theta)) \sin(\theta) (1-l) (y+y^*) (1-l)\\
& = 2 (1-\cos(\theta)) (1-l) - 2 (1-\cos(\theta)) \sin(\theta) a\\
&= 2 (1-\cos(\theta)) ((1-l)- \sin(\theta) a)\neq 0,
\end{aligned}$$ where $a=(1-l) \frac{y+y^*}{2} (1-l)$ is a hermitian element in the closed unit ball of $(1-l)A(1-l)$, and hence $\|\sin(\theta) a\|\leq |\sin(\theta)| <1$.\smallskip

Finally, the identity $$P_2(y) (y_{\theta}) = l y l r + \cos(\theta) l y (1-l) r +\sin(\theta) l (1-l) r = l y l + \cos(\theta) l y (1-l) $$ allows us to conclude that $\|y -P_2(y)(y_{\theta})\| = \|(1-\cos(\theta)) l y (1-l) \| \leq 1-\cos(\theta)$, which finishes the proof.
\end{proof}

Our goal in this section is to establish a similar characterisation of unitaries to that given in Theorem \ref{t metric charact of unitaries Mori} in the setting of JB$^*$-algebras and JB$^*$-triples.
It should be noted that the characterisation of unitaries in the case of JB$^*$-algebras is far from being a consequence of the result in the associative case.
We begin by describing the set of partial isometries at distance smaller than or equal to $\sqrt{2}$ from the unit of a JB$^*$-algebra. As observed by Mori in \cite{Mori2017}, in the easiest case $A =\mathbb{C}$, for $u\in\partial_e(\mathcal{B}_A) = \{ z\in\mathbb{C}: |z| =1\}$, we have $\mathcal{A}_u =\{ e\in \partial_e(\mathcal{B}_{\mathbb{C}}) : \|u\pm e\|= \sqrt{2} \}=\{iu, -i u\}$. But we can also add that $\mathcal{A}_u =\{ e\in \partial_e(\mathcal{B}_{\mathbb{C}}) : \|u\pm e\|\leq \sqrt{2} \}$.

\begin{lemma}\label{l distance sqrt2 between the unit and a tripotent} Let $M$ be a unital JB$^*$-algebra. Let $e$ be a tripotent in $M$ satisfying $\|1\pm e\|\leq \sqrt{2}$. Then there exist two orthogonal projections $p,q$ in $M$ such that $e = i (p-q)$. Consequently,
$$ \left\{ e\in \hbox{Trip}(M) : \|1\pm e\| \leq \sqrt{2} \right\} = \left\{i(p-q) : p,q\in\mathcal{P}(M) \hbox{ with }p\perp q\right\}.$$
\end{lemma}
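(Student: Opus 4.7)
The plan is as follows. The inclusion $\supseteq$ in the displayed equality is a short verification. Given orthogonal projections $p,q\in M$, set $e:=i(p-q)$; then $e^*=-e$, and a direct computation using $p\circ q=0$ gives $e\circ e^*=p+q$, $e^2=-(p+q)$, and $\{e,e,e\}=2(e\circ e^*)\circ e - e^2\circ e^*=2e-e=e$, so $e$ is a tripotent. The unital JB$^*$-subalgebra generated by $\{1,p,q\}$ is commutative, hence by Gelfand C$^*$-isomorphic to a $C(K)$, and there one reads off $\|1\pm i(p-q)\|=\max\{|1\pm i|,|1\mp i|,1\}=\sqrt{2}$.

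The content of the lemma is therefore the forward assertion, for which my strategy is to reduce to an associative setting via Shirshov--Cohn. Let $N$ be the closed unital JB$^*$-subalgebra of $M$ generated by $e$; equivalently, it is generated by the two hermitian elements $e+e^*$ and $-i(e-e^*)$. By the Shirshov--Cohn theorem, $N$ is a JC$^*$-algebra, so one can fix a unital isometric Jordan $^*$-monomorphism $\Psi:N\hookrightarrow A$ into some C$^*$-algebra $A$. Since $\Psi$ preserves the JB$^*$-triple product, $v:=\Psi(e)$ is a partial isometry in $A$ and $\|1\pm v\|\le\sqrt{2}$. I will produce orthogonal projections $q_1,q_2$ in the abelian C$^*$-subalgebra of $A$ generated by $v,v^*,1$ (which is contained in $\Psi(N)$) with $v=i(q_1-q_2)$; pulling back, $p_j:=\Psi^{-1}(q_j)\in N\subseteq M$ are then orthogonal projections in $M$ with $e=i(p_1-p_2)$, using the injectivity of $\Psi$ to transport projection-hood and orthogonality back.

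Inside $A$, set $h=v+v^*$ and write $p_L=vv^*$, $p_R=v^*v$. Expanding the hypothesis as $(1\pm v)^*(1\pm v)=1\pm h+p_R\le 2\cdot 1$ and $(1\pm v)(1\pm v)^*=1\pm h+p_L\le 2\cdot 1$ gives $|h|\le 1-p_R$ and $|h|\le 1-p_L$, hence $h^2\le 1-p_R$ and $h^2\le 1-p_L$; since a positive element dominated by $1-r$ annihilates the projection $r$, this forces $hp_L=hp_R=0$. Using the partial-isometry identities $vp_R=v$, $p_Lv=v$, $v^*p_L=v^*$, $p_Rv^*=v^*$ (together with $v(1-p_R)=0$ and $v^*(1-p_L)=0$), the vanishings $hp_L=0$ and $hp_R=0$ unravel to $v^2v^*=-v^*$ and $(v^*)^2v=-v$, which in turn give $v^2=-p_R$ and $(v^*)^2=-p_L$. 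Taking adjoints yields $p_L=p_R=:p$, so $v$ is normal with $|v|^2=p$ and $v^2=-p$. Continuous functional calculus in the abelian C$^*$-subalgebra generated by $v,v^*,1$ then shows $\sigma(v)\subseteq\{0,i,-i\}$, giving the spectral decomposition $v=i(q_1-q_2)$ into orthogonal projections. The main technical obstacle I anticipate is precisely this chain of C$^*$-algebraic manipulations producing $v^2=-p_R$ from $hp_R=0$, where the partial-isometry relations must be deployed carefully; once that is in hand, the spectral decomposition and the pull-back through $\Psi$ are essentially routine.
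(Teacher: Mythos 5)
Your overall strategy coincides with the paper's: both proofs pass to the unital JB$^*$-subalgebra $N$ generated by $e$, invoke Shirshov--Cohn (plus Wright's theorem) to realise it inside a C$^*$-algebra, expand $(1\pm e)(1\pm e)^*$ and $(1\pm e)^*(1\pm e)$ to kill $h=e+e^*$ against the support projections of $e$, and finish by spectrally decomposing the resulting skew-adjoint (equivalently, normal with $v^2=-v^*v$) partial isometry and pulling the two projections back into $M$. Your algebraic chain ($hp_R=0\Leftrightarrow (v^*)^2v=-v$, then $v^2=-p_R$, $p_L=p_R$, $\sigma(v)\subseteq\{0,\pm i\}$) is correct, and the pull-back through $\Psi$ is legitimate because, once $v$ is known to be normal, the associative C$^*$-subalgebra generated by $v,v^*,1$ is commutative and therefore coincides with the Jordan subalgebra they generate, so it sits inside $\Psi(N)$; the paper instead recovers $p,q\in M$ directly from $p+q=-e^2$ and $p-q=-ie$.

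There is, however, one step whose justification is invalid as written: from $\pm h\le 1-p_R$ you infer $|h|\le 1-p_R$. For a self-adjoint $h$ and a positive $a$, the inequalities $-a\le h\le a$ do \emph{not} imply $|h|\le a$ in general (the operator absolute value is not monotone in this sense; a $2\times 2$ counterexample is easy to produce). The conclusion you actually need, $hp_R=0$, is nevertheless true, and it is exactly what the paper's compression argument delivers: applying $U_{p_R}(x)=p_Rxp_R$ to $0\le (1-p_R)\mp h$ gives $\mp\, p_Rhp_R\ge 0$ for both signs, hence $p_Rhp_R=0$; then the positive element $x:=(1-p_R)-h$ satisfies $p_Rxp_R=0$, so $x^{1/2}p_R=0$, so $xp_R=0$, and therefore $0=xp_R=(1-p_R)p_R-hp_R=-hp_R$. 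The same argument with $p_L$ gives $hp_L=0$. With this repair your proof is complete and is essentially the paper's argument in a slightly different order: the paper first concludes $e+e^*=0$ outright and then decomposes the skew-adjoint partial isometry as $e=i(p-q)$, whereas you keep the two vanishing conditions separate and extract normality and the spectrum of $v$ from them.
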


\begin{proof} Let $N$ denote the JB$^*$-subalgebra of $M$ generated by $1,e$ and $e^*$. It follows from the Shirshov-Cohn theorem \cite[Theorems 2.4.14 and 7.2.5]{HOS}, combined with Wright's theorem \cite[Corollary 2.2 and subsequent comments]{Wri77}, that $N$ is special, that is, there exists a unital C$^*$-algebra $A$ containing $N$ as unital JB$^*$-subalgebra. The C$^*$-algebra $A$ contains $1$ and the partial isometry $e$ and we have $\|1\pm e\|\leq \sqrt{2}$. Let us write  $l=ee^*$ and $r= e^* e$ for the left an right projections of $e$ in $N$. Then, it follows that \begin{equation}\label{eq 1 0806} 0\leq\frac12( 1 +l \pm (e+e^*)) =\frac12  (1\pm e) (1\pm e)^* \leq \frac12 \|(1\pm e) (1\pm e)^*\| 1 = \frac{\|1\pm e\|^2}{2} 1 \leq 1,
 \end{equation}which implies that $2 l \pm U_l (e+e^*) \leq 2 l,$ where we have applied that the mapping $U_l$ is positive. Therefore $U_l (e+e^*) =0$, and it follows from the definition of $l$ that $U_{1-l} (e) = U_{1-l} (e^*)=0$. Back to \eqref{eq 1 0806} we get $$2 l + (1-l) \pm e (1-l) \pm (1-l) e^*  = 1 +l \pm (e+e^*) \leq 2 \ 1 = 2 l + 2 (1-l),$$ inequality which implies that $$ \pm (e (1-l) + (1-l) e^*)\leq 1-l $$ and hence $e (1-l) + (1-l) e^* =0,$ or equivalently, $e (1-l) =- (1-l) e^*$. We have shown that $$e+e^* = U_l (e+e^*)+ (1-l) (e+e^*) l + l (e+e^*) (1-l) +  U_{1-l} (e+e^*) = 0,$$ that is $e = -e^*$ is a skew symmetric partial isometry in $A$, and thus there exist two orthogonal projections $p,q$ in $A$ such that $e = i (p-q)$. Since $e= i (p-q)\in M,$ it follows that $e^2 = -p - q$ and $p-q$ both belong to $M$, and consequently, $p,q\in M,$  which concludes the proof.
\end{proof}

Given a tripotent $u$ in a JB$^*$-triple $E$, the Peirce 2-subspace $E_2(u)$ is a unital JB$^*$-algebra with unit $u$ (see page \pageref{eq Peirce-2 is a JB-algebra}). So, the first statement in the next corollary is a straight consequence of our previous lemma.

\begin{corollary}\label{c distance smaller than or equal to sqrt2} Let $u$ be a tripotent in a JB$^*$-triple $E$. Then
$$\{ e\in \hbox{Trip}(E_2(u)) : \|u\pm e\| \leq \sqrt{2} \} = \{i(p-q) : p,q\in \mathcal{P}( E_2(u)) \hbox{ with }p\perp q\}.$$  Furthermore, if $u$ is unitary in $E$, then
\begin{equation}\label{eq 2 0806} \mathcal{E}_u =\left\{ e\in \partial_e(\mathcal{B}_E) : \|u\pm e\| \leq \sqrt{2} \right\}= i \hbox{Symm} (E_2(u))
\end{equation} $$=\{i (p-q): p,q\in \hbox{Trip}(E), p,q\leq u, p\perp q, p+q =u  \}$$ and the elements $\pm i u$ are isolated in $\mathcal{E}_u$.
\end{corollary}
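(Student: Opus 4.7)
My plan is as follows. The first equality is essentially a direct application of Lemma \ref{l distance sqrt2 between the unit and a tripotent}: the Peirce-2 subspace $E_2(u)$ is a unital JB$^*$-algebra whose unit is $u$ (see page \pageref{eq Peirce-2 is a JB-algebra}), and its triple product as a JB$^*$-subtriple of $E$ coincides with the one derived from its own JB$^*$-algebra structure. Applying the lemma inside $M = E_2(u)$, with $u$ playing the role of the unit, delivers the first equality at once.

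Next assume $u$ is unitary, so $E = E_2(u)$ and $\partial_e(\mathcal{B}_E)$ is the set of complete tripotents of $E$, all of which automatically lie in $E_2(u)$. Hence the first part already yields
$$
\mathcal{E}_u \subseteq \{i(p-q) : p,q \in \mathcal{P}(E),\ p \perp q\},
$$
and the remaining question is which of these elements are complete in $E$. The key observation is that $p,-q$ are orthogonal tripotents, so $p-q$ is a hermitian tripotent with $(p-q)^2 = p+q$, and by Peirce arithmetic
$$
E_0(i(p-q)) = E_0(p-q) = E_0(p)\cap E_0(q) = E_0(p+q) = U_{u-p-q}(E),
$$
which vanishes if and only if $p+q = u$. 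This identifies $\mathcal{E}_u$ with the set of elements $i(p-q)$ satisfying $p \perp q$ and $p + q = u$. To pass to the description $i\,\hbox{Symm}(E_2(u))$, I invoke \eqref{eq complete tripotents are symmetries}: every symmetry of $E_{sa}$ splits as a difference of two orthogonal projections summing to $u$, and conversely every such difference squares to $u$. The third description, involving tripotents $p,q \leq u$ in $E$, is just the standard translation between projections of $E_2(u)$ and tripotents of $E$ dominated by $u$.

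For the isolation claim, observe that $\pm iu$ correspond to the trivial splittings $u = u + 0$ and $u = 0 + u$. For any other element $i(p-q)$ of $\mathcal{E}_u$, both $p$ and $q$ are nonzero projections with $p + q = u$, hence $u - p = q$ and
$$
\|iu - i(p-q)\| = \|u - p + q\| = \|2q\| = 2\|q\| = 2.
$$
Thus every element of $\mathcal{E}_u \setminus \{iu\}$ sits at norm distance $2$ from $iu$, so $iu$ is isolated in $\mathcal{E}_u$; the case $-iu$ is entirely symmetric.

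The step I expect to require the most care is the Peirce identification $E_0(i(p-q)) = U_{u-p-q}(E)$, since this is precisely what forces $p+q = u$ once $i(p-q)$ is assumed complete. The rest is essentially bookkeeping built on top of the first equality together with the standard facts on symmetries, the correspondence between subprojections and dominated tripotents, and Peirce decompositions in unital JB$^*$-algebras.
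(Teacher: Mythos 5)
Your argument is correct and follows essentially the same route as the paper: the first equality is obtained by applying Lemma \ref{l distance sqrt2 between the unit and a tripotent} to the unital JB$^*$-algebra $E_2(u)$, completeness then forces $p+q=u$ (the paper compresses your Peirce computation into the remark that otherwise $u-p-q$ would be a non-zero element of $E_0(e)$), and the isolation of $\pm i u$ is established by the same orthogonality-based norm computation giving distance $2$ to every other point of $\mathcal{E}_u$.
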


\begin{proof} The first statement is a consequence of Lemma \ref{l distance sqrt2 between the unit and a tripotent}. If $u$ is unitary the equality $E=E_2(u)$ holds. Having in mind that $\partial_e(\mathcal{B}_E) =\hbox{Trip}_{max} (E)$, we deduce from the first statement that $$\mathcal{E}_u \subseteq \{i(p-q) : p,q\in \hbox{Trip}(E), p,q\leq u, p\perp q\}.$$ But every $e= i(p-q)\in \mathcal{E}_u$ must be also a complete tripotent in $E$, which forces $p+q=u,$ otherwise $r=u-p-q$ would be a non-zero element in $E_0(e)$, which is impossible, so \eqref{eq 2 0806} is clear. It is obvious that $\pm i u\in \mathcal{E}_u$ and for any $i (p-q)\in \mathcal{E}_u\backslash \{\pm i u\}$ we have $$\| i u \pm i (p-q) \| = \| i (1\pm 1 ) p + i (1\mp 1) q\| =  \max\{\|(1\pm 1 ) p\|,\| (1\mp 1) q\| \}= 2.$$ This proves that $\pm i u$ are isolated in $\mathcal{E}_u$.
\end{proof}

The Jordan version of the Theorem \ref{t metric charact of unitaries Mori}$(a)\Rightarrow (b)$ has been established in Corollary \ref{c distance smaller than or equal to sqrt2} even in the setting of JB$^*$-triples. For the reciprocal implication we shall first prove a technical result which also holds for JB$^*$-triples.

\begin{proposition}\label{p metric charact of unitaries JBtriples partial sufficient} Let $u$ be a tripotent in a JB$^*$-triple $E$, and let $$\mathcal{E}_{u} = \{e\in \partial_e(\mathcal{B}_E) : \|u\pm e\|\leq \sqrt{2} \}.$$ Then every element $y\in \mathcal{E}_u$ with $P_1 (u) (y) \neq 0$ or $P_0(u) (y) \neq 0$ is non-isolated in $\mathcal{E}_{u}$. Consequently, every isolated element $y\in \mathcal{E}_{u}$ belongs to $i \hbox{Symm}(E_2(u))$.
\end{proposition}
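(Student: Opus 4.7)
The plan is to exhibit, for each sufficiently small $\beta\neq 0$, an element of $\mathcal{E}_u$ close to but distinct from $y$, obtained by acting on $y$ with a surjective linear isometry of $E$ that fixes $u$. Setting $\alpha_m := 2-m$ for $m\in\{0,1,2\}$, I will work with the bounded linear map
$$\phi_\beta := P_2(u) + e^{i\beta} P_1(u) + e^{2i\beta} P_0(u) : E\to E,$$
which is bijective with inverse $\phi_{-\beta}$.

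The first task is to verify that $\phi_\beta$ is a triple homomorphism. For $x_i\in E_i(u)$, $x_j\in E_j(u)$, $x_k\in E_k(u)$, the Peirce arithmetic places $\J{x_i}{x_j}{x_k}$ in $E_{i-j+k}(u)$ (the product vanishing when $i-j+k\notin\{0,1,2\}$ or the zero rule applies), and conjugate-linearity of the triple product in its middle slot yields
$$\J{\phi_\beta x_i}{\phi_\beta x_j}{\phi_\beta x_k} = e^{i\beta(\alpha_i-\alpha_j+\alpha_k)}\J{x_i}{x_j}{x_k},$$
which matches $\phi_\beta\J{x_i}{x_j}{x_k}$ thanks to the affine identity $\alpha_i-\alpha_j+\alpha_k = \alpha_{i-j+k}$. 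Hence $\phi_\beta$ will be a triple isomorphism of $E$, and by Kaup's theorem (\cite[Proposition 5.5]{Ka83}) a surjective linear isometry.

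Since $u\in E_2(u)$ and $\alpha_2 = 0$ one then has $\phi_\beta(u)=u$, whence $\|u\pm\phi_\beta(y)\| = \|\phi_\beta(u\pm y)\| = \|u\pm y\|\leq\sqrt{2}$ for $y\in\mathcal{E}_u$, and $\phi_\beta(y)\in\partial_e(\mathcal{B}_E)$ because isometries preserve extreme points. Thus $\phi_\beta(y)\in\mathcal{E}_u$. Writing the Peirce decomposition $y=y_2+y_1+y_0$ with $y_j = P_j(u)(y)$, the difference
$$\phi_\beta(y) - y = (e^{i\beta}-1)\, y_1 + (e^{2i\beta}-1)\, y_0 \in E_1(u)\oplus E_0(u)$$
will be nonzero for all sufficiently small $\beta\neq 0$ whenever $y_1\neq 0$ or $y_0\neq 0$, by directness of the Peirce sum. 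This proves non-isolation of any such $y$ in $\mathcal{E}_u$.

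For the \emph{consequently} clause, any isolated $y\in\mathcal{E}_u$ must then satisfy $P_1(u)(y) = P_0(u)(y) = 0$, placing $y$ in $E_2(u)$. I will apply the first part of Corollary \ref{c distance smaller than or equal to sqrt2} inside the JB$^*$-algebra $E_2(u)$ to obtain $y = i(p-q)$ for orthogonal projections $p,q\in\mathcal{P}(E_2(u))$. To land $y$ in $i\hbox{Symm}(E_2(u))$ it will remain to show $p+q=u$: setting $r=p+q$, a short computation in $E_2(u)$ using the Jordan triple product formula \eqref{eq product jordan} (the two ``cross'' terms $(z\circ y^*)\circ y$ and $(y\circ z)\circ y^*$ cancel by commutativity of the Jordan product applied to the self-adjoint $p-q$) reduces to $\J{y}{y}{z} = r\circ z$ for every $z\in E_2(u)$; in particular $\J{y}{y}{u-r} = 0$, so $u-r\in E_0(y)$, and since $y$ is complete in $E$ (being an extreme point of $\mathcal{B}_E$) $E_0(y)=\{0\}$, forcing $p+q=u$ and placing $y$ in $i\hbox{Symm}(E_2(u))$. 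The main bookkeeping care is expected to lie in the affine/conjugate-linearity verification making $\phi_\beta$ a triple homomorphism and in the cancellation yielding $\J{y}{y}{z}=r\circ z$, together with the crucial use of completeness of $y$ in the ambient $E$ (rather than in $E_2(u)$) to conclude $p+q=u$.
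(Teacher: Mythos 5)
Your proof is correct and follows essentially the same route as the paper: your $\phi_\beta$ is exactly the operator $R_\lambda(u)=P_2(u)+\lambda P_1(u)+\lambda^2 P_0(u)$ with $\lambda=e^{i\beta}$ used in the paper (there justified by citing \cite[Lemma 1.1]{FriRu85} rather than by your direct Peirce-arithmetic verification), and the non-isolation argument via $\phi_\beta(u)=u$ and $\phi_\beta(y)\to y$ is identical. Your explicit derivation of $p+q=u$ from $\J yy{u-p-q}=0$ and completeness of $y$ in $E$ is a welcome filling-in of a step the paper delegates, somewhat tersely, to Corollary \ref{c distance smaller than or equal to sqrt2}.
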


\begin{proof}  Let us take $y \in \mathcal{E}_{u}$  with $P_1(u) (y) \neq 0$ or $P_0(u) (y) \neq 0$. By \cite[Lemma 1.1]{FriRu85} for each $\lambda\in \mathbb{C}$ with $|\lambda| = 1$ the mapping $S_{\overline{\lambda}} (u) = \overline{\lambda}^2 P_2(u) + \overline{\lambda} P_1(u) + P_0(u) =  \overline{\lambda}^2 P_2(u) + \overline{\lambda} P_1(u) + P_0(u) $ is an isometric triple isomorphism on $E$. Therefore the mapping
$R_{{\lambda}} (u) ={\lambda}^2 S_{\overline{\lambda}} (u) =  P_2(u) + {\lambda} P_1(u)+ \lambda^2 P_0(u) $ is an isometric triple isomorphism on $E$ for all $\lambda$ in the unit sphere of $\mathbb{C}$. Since Peirce projections are contractive $$\| y -  R_{{\lambda}} (u) (y) \|\geq \max\left\{ |\lambda-1| \| P_1(u) (y)\|, |\lambda^2-1| \| P_0(u) (y)\|  \right\} >0,$$ for all $\lambda\in \mathbb{T}\backslash\{\pm1 \}$. Clearly, $\displaystyle R_{{\lambda}} (u) (y) \stackrel{{\lambda\to 1}}{\longrightarrow} y$ in norm.\smallskip

On the other hand, $R_{\lambda} (u) (u) = u $ for all  $|\lambda|=1$. Since $R_{\lambda}(u)$ is an isometric triple automorphism on $E$ and $y\in\partial_e(\mathcal{B}_{E})$ we deduce that $R_{\lambda}(u) (y)\in \partial_e(\mathcal{B}_{E}),$ and $$\|u \pm R_{\lambda}(u) (y) \| = \| R_{\lambda}(u) (u) \pm R_{\lambda}(u) (y) \| =\| R_{\lambda}(u) (u\pm y)  \| = \|u\pm y\|\leq \sqrt{2},$$ for all  $|\lambda|=1$. Therefore $y$ is non-isolated in $\mathcal{E}_{u}$, which concludes the proof of the first statement.\smallskip

For the last statement, let us assume that $y\in \mathcal{E}_{u}$ is an isolated point. It follows from the first statement that $P_1(u) (y) = 0 = P_0(u) (y)$. That is, $y\in \partial_e(\mathcal{B}_{E})\cap E_2(u)$ with $\|u \pm y \|\leq \sqrt{2}$. We conclude from Corollary \ref{c distance smaller than or equal to sqrt2} that $y \in i \hbox{Symm} (E_2(u))$.
\end{proof}

\begin{remark}\label{r new july 10}{\rm The arguments given the proof of Proposition \ref{p metric charact of unitaries JBtriples partial sufficient} are valid to establish the following: Let $u$ be a tripotent in a JB$^*$-triple $E$, and let $$\displaystyle \tilde{\mathcal{E}}_{u} = \{e\in \hbox{Trip}(E) : \|u\pm e\|\leq \sqrt{2} \}.$$ Then every element $y\in \tilde{\mathcal{E}}_u$ with $P_1 (u) (y) \neq 0$ or $P_0(u) (y) \neq 0$ is non-isolated in $\tilde{\mathcal{E}}_{u}$.}
\end{remark}

We continue gathering the tools and results needed in our characterisation of unitaries in JB$^*$-algebras. One of the most successful tools in the theory of Jordan algebras is the Shirshov-Cohn theorem, which affirms that the JB$^*$-subalgebra of a JB$^*$-algebra generated by two symmetric elements (and the unit element) is a JC$^*$-algebra, that is, a JB$^*$-subalgebra of some $B(H)$ (cf. \cite[Theorem 7.2.5]{HOS} and \cite[Corollary 2.2]{Wri77}). The next lemma is an appropriate version of the Shirshov-Cohn theorem.

\begin{lemma}\label{l JB*-subalgebra generated by two orthogonal tripotents} Let $u_1$ and $u_2$ be two orthogonal tripotents in a unital JB$^*$-algebra $M$. Then the JB$^*$-subalgebra $N$ of $M$ generated by $u_1,u_1^*,u_2, u_2^*$ and the unit element is a JC$^*$-algebra, that is, there exists a complex Hilbert space $H$ satisfying that $N$ is a JB$^*$-subalgebra of $B(H)$, we can further assume that the unit of $N$ coincides with the identity on $H$.
\end{lemma}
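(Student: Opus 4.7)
The plan is to exploit the orthogonality of $u_1$ and $u_2$ in order to shrink the generating set $\{1, u_1, u_1^*, u_2, u_2^*\}$ down to just $\{1, v\}$ for a single cleverly chosen element $v$, so that the Shirshov-Cohn theorem applies directly. Concretely, I would set $v = u_1 + 2u_2$. Since $u_1 \perp u_2$ in the JB$^*$-triple sense, both $L(u_j,u_k) = 0$ and $\J{u_j}{u_j}{u_k} = 0$ hold for $j \neq k$; consequently every triple product $\J{u_i}{u_j}{u_k}$ whose index multiset is not constant vanishes. Expanding $\J vvv$ by trilinearity, only the two ``pure'' summands $\J{u_1}{u_1}{u_1} = u_1$ and $\J{2u_2}{2u_2}{2u_2} = 8u_2$ survive, giving $\J vvv = u_1 + 8u_2$. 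Hence $u_2 = \tfrac{1}{6}(\J vvv - v)$ and $u_1 = v - 2u_2$ both lie in the algebraic Jordan $*$-subalgebra of $M$ generated by $1$ and $v$, since by \eqref{eq product jordan} the triple product is expressible through the Jordan product and the involution. It follows that $N$ coincides with the JB$^*$-subalgebra of $M$ generated by $\{1,v\}$, which in turn equals the JB$^*$-subalgebra generated by $1$ and the two self-adjoint elements $a = \tfrac{1}{2}(v + v^*)$ and $b = \tfrac{1}{2i}(v - v^*)$.

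At this point the Shirshov-Cohn theorem \cite[Theorem 7.2.5]{HOS} combined with Wright's theorem \cite[Corollary 2.2]{Wri77} directly yields that $N$ is a JC$^*$-algebra, so there exists an isometric Jordan $*$-monomorphism $\Psi: N \to B(H_0)$ for some complex Hilbert space $H_0$. To upgrade this to a unital embedding, let $p = \Psi(1_N)$, which is a self-adjoint idempotent in $B(H_0)$. The Jordan identity $\Psi(x) \circ p = \Psi(x \circ 1_N) = \Psi(x)$ translates to $p\,\Psi(x) + \Psi(x)\,p = 2\,\Psi(x)$, and together with $p^2 = p$ this forces $\Psi(x) = p\,\Psi(x)\,p$ for every $x \in N$. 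Consequently $\Psi(N) \subseteq p\,B(H_0)\,p$, and identifying $p\,B(H_0)\,p$ with $B(H)$ for $H = p(H_0)$ delivers the desired unital embedding sending $1_N$ to the identity on $H$.

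The only nontrivial ingredient in the entire argument is the specific choice of coefficient $2$ in the definition of $v$: any real scalar $\lambda \notin \{0, \pm 1\}$ would work equally well, since it is precisely what makes $u_1$ and $u_2$ linearly recoverable from the pair $\{v, \J vvv\}$. Beyond this single observation, the proof is a clean application of Shirshov-Cohn plus a routine restriction of the representation to the range of $p$, so no serious obstacle is expected.
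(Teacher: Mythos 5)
Your proof is correct and takes essentially the same approach as the paper: the paper likewise reduces to the single generator $e = u_1 + t\,u_2$ (with $t\in(0,1)$) together with $e^*$ and the unit, applies the Shirshov--Cohn theorem plus Wright's theorem to that two-self-adjoint-generator subalgebra, and then recovers $u_1$ (hence $u_2$) inside it --- there via the norm limit of the odd powers $e^{[2n-1]} = u_1 + t^{2n-1}u_2$, whereas you use the exact identity $u_2 = \tfrac16\left(\J vvv - v\right)$ for $v = u_1+2u_2$. Your one-step algebraic recovery and your explicit $p\,B(H_0)\,p$ argument for making the embedding unital (which the paper delegates to \cite[Lemma 6.2]{hamhalter2019mwnc}) are both valid; the substance of the two proofs is the same.
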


\begin{proof} Let us fix $t\in (0,1)$. We consider the element $e=u_1+t u_2$. Let $N_0$ denote the JB$^*$-subalgebra of $M$ generated by $e$, $e^*$ and the unit element. It follows from the Shirshov-Cohn theorem that $N_0$ is a JC$^*$-algebra. We observe that $N_0$ is a JB$^*$-subtriple of $M$, therefore the element $e^{[2n-1]}$ belongs to $N_0$ for all natural $n$. Now, applying that $u_1$ and $u_2$ are two orthogonal tripotents, we can deduce that $$e^{[2n-1]} = u_1 + t^{(2n-1)} u_2 .$$ The sequence $(e^{[2n-1]})_n = (u_1 + t^{(2n-1)} u_2)_n$ converges in norm to $u_1$, and thus $u_1$ lies in $N_0$. Consequently, $u_1$ and $u_2$ both belong to $N_0$. \smallskip

Since $N_0$ and $N$ are JB$^*$-subalgebras of $M$, $u_1,u_2\in N_0$ and clearly $e\in N$, it follows from their definition that $N=N_0$ is a JC$^*$-algebra.\smallskip

The final statement can be obtained as in the proof of \cite[Lemma 6.2]{hamhalter2019mwnc}.
\end{proof}

The next result is inspired by \cite[Lemmata 6.2 and 6.3]{hamhalter2019mwnc}.

\begin{proposition}\label{p complete tripotents in the Cstar algebra generated} Let $u_1$ and $u_2$ be two orthogonal tripotents in a unital JB$^*$-algebra $M$ satisfying the following properties: \begin{enumerate}[$(a)$]\item $u= u_1+u_2$ a complete tripotent in $M$;
\item $u_1,u_2$ are central projections in the JB$^*$-algebra $M_2(u)$.
\end{enumerate} Let $N$ denote the JB$^*$-subalgebra of $M$ generated by $u_1$, $u_2$ and the unit element.
Then $N$ is a JC$^*$-subalgebra of some C$^*$-algebra $B$, and $u$ is a complete tripotent in the C$^*$-subalgebra $A$ of $B$ generated by $N$. Moreover, the elements $u_1,u_2$ are central projections in the JB$^*$-algebra $A_2(u)$.
\end{proposition}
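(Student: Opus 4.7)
The plan is to apply Lemma~\ref{l JB*-subalgebra generated by two orthogonal tripotents} to realise $N$ as a unital JB$^*$-subalgebra of some $B = B(H)$ with matching unit, and to let $A$ denote the norm-closed unital $*$-subalgebra of $B$ generated by $N$. Because the embedding preserves triple products, the orthogonality $u_1 \perp u_2$ translates to $u_1 u_2^* = u_1^* u_2 = u_2 u_1^* = u_2^* u_1 = 0$ in $B$, so $u = u_1 + u_2$ is a partial isometry in $A$ with $uu^* = l_1 + l_2$ and $u^* u = r_1 + r_2$, where $l_i := u_i u_i^*$ and $r_i := u_i^* u_i$.

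The crucial structural step is to upgrade hypothesis~(b) to vanishing relations in $B$. Since $u_1, u_2$ are central projections in $M_2(u)$, they remain so in $N_2(u) = N \cap M_2(u)$. Computing Jordan products inside $B$ via $x \circ_u y = \tfrac12(x u^* y + y u^* x)$, one finds $u_1 \circ_u c = \tfrac12(l_1 c + c r_1)$ and the automatic identity $u_1 \circ_u u_2 = 0$; substituting these into the Jordan-operator-commutation $(u_1 \circ_u u_2) \circ_u c = u_1 \circ_u (u_2 \circ_u c)$ for $c \in N_2(u)$, and using $l_1 l_2 = r_1 r_2 = 0$, reduces the right-hand side to $\tfrac14 (l_1 c r_2 + l_2 c r_1)$. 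Since the two summands lie in orthogonal off-diagonal blocks of $B$, each must vanish separately, yielding $l_1 N r_2 = l_2 N r_1 = \{0\}$. Specialising to $c = u_1 \circ u_2 = \tfrac12(u_1 u_2 + u_2 u_1) \in N_2(u)$ forces $u_1 u_2 = u_2 u_1 = 0$; by $*$-adjunction, $u_1^* u_2^* = u_2^* u_1^* = 0$ as well. Thus every mixed product $u_i^{\varepsilon} u_j^{\delta}$ with $i \neq j$ and $\varepsilon, \delta \in \{1, *\}$ vanishes in $B$.

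These relations force the block decomposition $A = \CC \cdot 1 + A_1 + A_2$, where $A_i$ is the closed $*$-subalgebra of $B$ generated by $u_i$; one has $A_1 A_2 = A_2 A_1 = \{0\}$ together with the absorption identities $l_1 A_2 = A_1 l_2 = l_2 A_1 = A_2 l_1 = 0$. For the centrality of $u_1, u_2$ in $A_2(u)$, the standard identification $A_2(u) \cong l A l$ via $x \mapsto x u^*$ sends $u_i$ to $l_i$; feeding the decomposition of $A$ together with the absorption identities into $l_1 A l_2$ and $l_2 A l_1$ collapses both off-diagonal pieces to zero, so each $l_i$ — hence each $u_i$ — is central. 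For the completeness of $u$ in $A$, the plan is to refine Lemma~\ref{l JB*-subalgebra generated by two orthogonal tripotents} along the lines of \cite[Lemma 6.1]{hamhalter2019mwnc}: hypothesis~(a) (completeness of $u$ in $M$) should allow the embedding to be chosen so that $u$ is realised as an isometry in $B(H)$, i.e.\ $u^* u = 1$; then $(1 - uu^*)\, A\, (1 - u^* u) = \{0\}$ is automatic. The principal technical hurdle is precisely this refined embedding, which is where hypothesis~(a) enters decisively.
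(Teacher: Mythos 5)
Your setup and your treatment of the \emph{centrality} claim are sound, and there your route is genuinely simpler than the paper's. The paper only extracts $l_1Nr_2=l_2Nr_1=\{0\}$ and $l_1r_2=l_2r_1=0$ from hypothesis $(b)$ and then proves $l_1Ar_2=l_2Ar_1=\{0\}$ by a two-stage induction on the length of words in $e=u_1+tu_2$ and $e^*$. You observe in addition that $u_1u_2=u_2u_1=0$, which is correct (it follows from your specialisation $c=u_1\circ u_2\in N_2(u)$, or even more directly from $u_1u_2=u_1(r_1l_2)u_2$ and $r_1l_2=(l_2r_1)^*=0$); hence every mixed word in $u_1,u_1^*,u_2,u_2^*$ vanishes, $A=\mathbb{C}1+A_1+A_2$ with $A_1A_2=A_2A_1=\{0\}$, and $l_1Ar_2=l_2Ar_1=\{0\}$ drops out without any induction. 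That part of your argument is a valid shortcut.

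The gap is in the completeness of $u$ in $A$, which is the technical heart of the proposition and which you explicitly leave as a ``plan''. The refined embedding with $\Psi(u)^*\Psi(u)=1$ is not available off the shelf: \cite[Lemma 6.1]{hamhalter2019mwnc} takes as input a \emph{maximal} partial isometry in a unital C$^*$-algebra, and maximality of $u$ in the C$^*$-algebra $A$ is precisely what has to be proved, so invoking it here is circular. Hypothesis $(a)$ only gives completeness of $u$ in the Jordan algebras $M$ and $N$, and completeness does not automatically pass from a unital JC$^*$-subalgebra $N\subseteq B(H)$ to the C$^*$-algebra it generates; a Jordan analogue of Lemma 6.1 for complete tripotents of JC$^*$-algebras would require essentially the same work as the statement you are trying to prove. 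Nor does your block decomposition rescue the step: $(1-l)A(1-r)$ reduces to $(1-l_1)A_1(1-r_1)+(1-l_2)A_2(1-r_2)$, and a generic word in $u_1,u_1^*$ (say $u_1(u_1^*)^2u_1(u_1^*)^3$) need not lie in $N$, so one cannot simply quote $(1-l)N(1-r)=\{0\}$. The paper closes this by showing inductively that every word $x$ in $e,e^*$ satisfies $x(1-r)\in\bigl\{0,\ (1-r),\ (u_1^*)^n(1-r)+t^m(u_2^*)^n(1-r)\bigr\}$ (property $(\diamond)$), each of these forms being annihilated on the left by $1-l$ because $1$, $(u_1^*)^n$ and $(u_2^*)^n$ do lie in $N$ and $u$ is complete in $N$. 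Some argument of this type (or an actual proof of your refined-embedding lemma, which amounts to the same computation) is needed before your proof is complete.
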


\begin{proof} Lemma \ref{l JB*-subalgebra generated by two orthogonal tripotents} guarantees that $N$ is a JB$^*$-subalgebra of a unital C$^*$-algebra $B$, and we can also assume that $N$ contains the unit of $B$. Clearly, $u$, $u_1$ and $u_2$ are partial isometries in $A$. Let $l_i=u_i u_i^*$ and $r_i=u_i^* u_i$ denote the left and right projections of $u_i$ in $A$ ($i=1,2$). We shall also write $l = u u^*= l_1+l_2$ and $r=u^* u= r_1+r-2$, for the left and right projections of $u$ in $A$, respectively. Let us note that $l_1\perp l_2$ and $r_1\perp r_2$.\smallskip

By hypothesis, $u_1,u_2$ are central projections in the JB$^*$-algebra $M_2(u)$, and hence in $N_2(u)$. It then follows that the identity $$l N r = N_2(u)= N_2(u_1)\oplus^{\infty} N_2(u_2)= l_1 N r_1\oplus^{\infty} l_2 N r_2$$ holds. Having in mind that $1\in N$, we deduce that $lr = l_1 r_1 + l_2 r_2,$ and so $l_1 r = l_1 r_1$, which proves that $l_1 r_2 =0$. We can similarly prove that $l_2 r_1 =0$.\smallskip

Let $A$ denote the C$^*$-subalgebra of $B$ generated by $N$. We shall next show that $u$ is a complete tripotent in $A$. We know that $u$ is a complete tripotent in $M$, and hence in $N$. Clearly $u$ is a tripotent in $A$. The Peirce 0-projection on $A$ is given by $P_0 (u) (x) = (1-l) x (1-r)$ ($x\in A$). We therefore know that $(1-l) x (1-r) =0,$ for all $x\in N$. We shall prove that $(1-l) x (1-r) =0$ for all $x\in A$. For this purpose we shall adapt some technique from the proof of \cite[Lemma 6.2]{hamhalter2019mwnc}.\smallskip

Since $N$ is a JB$^*$-subalgebra of $A$, for each $n\in \mathbb{N}\cup\{0\}$, the elements $(u_1^*)^{n}$ and $(u_2^*)^{n}$ lie in $N$, and hence $(1-l) (u_1^*)^{n} (1-r)=(1-l) (u_2^*)^{n} (1-r)  =0$, or equivalently,
\begin{align}\label{eq powers on the kernel of Peirce 0} l_1 (u_1^*)^{n} (1-r) &=l (u_1^*)^{n} (1-r) = (u_1^*)^{n} (1-r), \hbox{ and } \\
l_2 (u_2^*)^{n} (1-r) &= l (u_2^*)^{n} (1-r) = (u_2^*)^{n} (1-r),\nonumber
\end{align} where in the first two equalities we applied that $l_1 r_2 =l_2 r_1 =0$.\smallskip

Fix $t\in (0,1)$. We have shown in the proof of Lemma \ref{l JB*-subalgebra generated by two orthogonal tripotents} that $N$ coincides with the JB$^*$-subalgebra of $M$ generated by $e=u_1 +t u_2$ and $1$. Let $A_0$ denote the set of all finite products of $e$ and $e^*$ and $1$. Since $A$ is the closed linear span of $A_0$ we only need to prove that $(1-l)x(1-r)=0$, for all $x\in A_0$.\smallskip

We say that an element $x\in A$ satisfies property $(\diamond)$ if \begin{equation*}\label{eq propety diamond}\hbox{$x(1-r)=0,$ or $x(1-r)=(1-r),$ or $x(1-r)=(u_1^*)^n(1-r)+ t^m (u_2^*)^n (1-r),$}
\end{equation*}  for some $n,m\in\mathbb{N}$.\smallskip

Let us fix an element $x\in A$ satisfying property $(\diamond)$. If $x(1-r)=0,$ we have $e^* x(1-r)=0,$ and $e x(1-r)=0.$ If $x(1-r)=(1-r),$ it follows that $$e^* x (1-r)= e^* (1-r) = u_1^* (1-r) + t e_2^* (1-r), \hbox{ and } e x(1-r) = e(1-r) =0.$$
If $x(1-r)=(u_1^*)^n (1-r)+ t^m (u_2^*)^n (1-r),$ for some $n,m\in\mathbb{N}$, it can be seen that $$e^* x(1-r)= e^* (u_1^*)^n (1-r)+ t^m e^*(u_2^*)^n (1-r) = (u_1^*)^{n+1} (1-r)+ t^{m+1} (u_2^*)^{n+1} (1-r),$$ where we applied that $u_1\perp u_2$, $l_1 r_2 =0,$ and $l_2 r_1 =0$. This shows that $e^* x$ satisfies property $(\diamond)$.\smallskip

In the latter case, by applying $u_1\perp u_2$, $l_1 r_2 =0,$ and $l_2 r_1 =0$, we also have $$\begin{aligned}e x(1-r) &=e (u_1^*)^n (1-r)+ t^m e (u_2^*)^n (1-r) \\
&= u_1 (u_1^*)^n (1-r)+ t^{m+1} u_2 (u_2^*)^n (1-r)\\
&= (u_1 u_1^*) (u_1^*)^{n-1} (1-r)+ t^{m+1} (u_2 u_2^*) (u_2^*)^{n-1} (1-r) \\
&= l_1  (u_1^*)^{n-1} (1-r)+ t^{m+1} l_2 (u_2^*)^{n-1} (1-r)\\
&=\hbox{(by \eqref{eq powers on the kernel of Peirce 0})} =(u_1^*)^{n-1} (1-r)+ t^{m+1} (u_2^*)^{n-1} (1-r),
\end{aligned} $$ witnessing that $e x$ satisfies property $(\diamond)$.\smallskip

We have proved that if $x$ satisfies property $(\diamond)$, then $ex$ and $e^*x$ both satisfy property $(\diamond)$. It is not hard to check that $1,e,$ and $e^*$ satisfy property $(\diamond)$. We can thus conclude that every element in $A_0$ satisfies property $(\diamond)$. So, for each $x\in A_0$ we have $(1-l) x (1-r) =0$ if $x (1-r) =0$. If $x (1-r) =(1-r)$, it follows from the fact that $1\in N$ and $u$ is complete in $N$, that $$(1-l) x (1-r) =(1-l)(1-r)= (1-l) 1 (1-r)= 0.$$ Finally, if $x(1-r)=(u_1^*)^n(1-r)+ t^m (u_2^*)^n (1-r),$ for some $n,m\in\mathbb{N}$, we easily check that $$ (1-l) x (1-r)= (1-l) (u_1^*)^n(1-r)+ t^m (1-l) (u_2^*)^n (1-r)=0,$$ where in the last equality we applied that $(u_1^*)^n, (u_2^*)^n\in N$ and $u$ is a complete tripotent in $N$. This proves that $(1-l) A_0 (1-r) =\{0\}$, and hence $u$ is complete in $A$.\smallskip

It remains to prove that $u_1$ and $u_2$ are central projections in $A_2(u)$. We claim that \begin{equation}\label{eq projections are central in the Cstar generated} l_1 A r_2 = l_2 A r_1 =\{0\}.
 \end{equation}Indeed, it is enough to prove that \begin{equation}\label{eq new induction statement} l_1 (x_1\cdots x_m) r_2 = l_2 (x_1\cdots x_m) r_1 =0,
 \end{equation} for all natural $m$ and $x_1,\ldots, x_m\in\{e,e^*\}$ because $N$ is the JB$^*$-subalgebra of $M$ generated by $e,e^*$ and the unit. We shall prove \eqref{eq new induction statement} by induction on $m$. We know from the hypotheses that $l_1 N r_2 = l_2 N r_1 =\{0\},$ so the case, $m=1$ is clear. \smallskip

The case $m=2$ is worth to be treated independently. The products of three elements are the following: $e^2, (e^*)^2, ee^*$ and $e^*e$. The elements $e^2$ and $(e^*)^2$ belong to $N$, and thus $l_1 e^2 r_2 = l_2 e^2 r_1 = l_1 (e^*)^2 r_2 = l_2 (e^*)^2 r_1 =0.$ By the properties seen in the above paragraphs we have $$ l_1 e e^* r_2= e r_1 e^* r_2 = e e^* l_1 r_2 =0. $$ Since $e\circ e^*\in N$, it follows that $l_1 (e e^* + e^* e) r_2=0.$ The last two equalities together give $$ l_1 e e^* r_2=  l_1 e^* e r_2 =0. $$ Similar arguments show that $$ l_2 e e^* r_1=  l_2 e^* e r_1 =0. $$

Suppose by the induction hypothesis that \eqref{eq new induction statement} for all natural numbers $2\leq m\leq m_0$. Let us make an observation, for any natural $k\leq m_0-1$ it follows from the induction hypothesis that $$l_1 (x_1\cdots x_{k}) l_2 e = l_1 x_1\cdots x_{k} e r_2 =0,$$ therefore
$$0 = (l_1 (x_1\cdots x_{k}) l_2 e)(l_1 (x_1\cdots x_{k}) l_2 e)^* = l_1 (x_1\cdots x_{k}) l_2 e e^* l_2 (x_k^*\cdots x_1^*) l_1 $$ $$= l_1 (x_1\cdots x_{k}) l_2  l_2 (x_k^*\cdots x_1^*) l_1 = \left(  l_1 (x_1\cdots x_{k}) l_2  \right) \left(  l_1 (x_1\cdots x_{k}) l_2  \right)^*,$$ witnessing that
\begin{equation}\label{eq sub induction hypothesis} l_1 (x_1\cdots x_{k}) l_2  = 0, \hbox{ for all natural } k\leq m_0-1.
\end{equation}

We deal next with the case $m_0+1$. We pick $x_1,\ldots x_{m_0},$ $x_{m_0+1}$ $\in $ $\{e,e^*\}$. Since $e^{m+1},(e^*)^{m+1}\in N$, the desired conclusion is clear for $x_1=\ldots=x_{m+1} =e$ and $x_1=\ldots=x_{m+1} =e^*$. We can therefore assume the existence of $j \in \{1,\ldots,m_0\}$ such that $x_j x_{j+1} = e^* e=1$ or $x_j x_{j+1} = e e^*$. In the first case $$l_1 x_1\cdots x_{m_0+1} r_2 = l_1 x_1\cdots x_{j-1} x_{j+1}\cdots x_{m_0+1} r_2 = 0,$$ by the induction hypothesis. In the second case we have $$l_1 x_1\cdots x_{m_0+1} r_2 = l_1 x_1\cdots x_{j-1} l x_{j+1} \cdots x_{m_0+1} r_2$$ $$=l_1 x_1\cdots x_{j-1} l_1 x_{j+1} \cdots x_{m_0+1} r_2 + l_1 x_1\cdots x_{j-1} l_2 x_{j+1} \cdots x_{m_0+1} r_2=0,$$ where in the last equality we applied \eqref{eq sub induction hypothesis} and the induction hypothesis.\smallskip

Similar ideas to those we gave above are also valid to establish  $$l_2 x_1\cdots x_{m} r_1 =0,\hbox{ for all } m\in \mathbb{N}, x_1\cdots x_{m}\in\{e,e^*\}.$$ This finishes the induction argument and the proof of the claim in \eqref{eq projections are central in the Cstar generated}. It follows from \eqref{eq projections are central in the Cstar generated} that $u_1$ and $u_2$ are central projections in $A_2(u)$.
\end{proof}

%
%
%
%
%

The desired characterisation of unitaries in a unital JB$^*$-algebra is now established in our main result.

\begin{theorem}\label{t metric charact of unitaries JBalgebras} Let $u$ be an extreme point of the closed unit ball of a unital JB$^*$-algebra $M$.
Then the following statements are equivalent:
\begin{enumerate}[$(a)$]\item $u$ is a unitary tripotent;
\item The set $\mathcal{M}_{u} = \{e\in \partial_e(\mathcal{B}_M) : \|u\pm e\|\leq \sqrt{2} \}$ contains an isolated point.
\end{enumerate}
\end{theorem}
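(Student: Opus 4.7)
The direction $(a)\Rightarrow(b)$ is free: when $u$ is unitary, Corollary~\ref{c distance smaller than or equal to sqrt2} exhibits $\pm iu$ as isolated points of $\mathcal{M}_u=\mathcal{E}_u$. For $(b)\Rightarrow(a)$, let $y$ be an isolated point of $\mathcal{M}_u$. Proposition~\ref{p metric charact of unitaries JBtriples partial sufficient}, applied with $E=M$, forces $y\in i\,\mathrm{Symm}(M_2(u))$; combining this with \eqref{eq complete tripotents are symmetries} for the JB$^*$-algebra $M_2(u)$ (whose unit is $u$), I write $y=i(p-q)$ with $p,q$ orthogonal projections in $M_2(u)$ and $p+q=u$.

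The first substantive step is to show that $p$ (and hence $q$) is a central projection of $M_2(u)$. Suppose not; Proposition~\ref{p equivalences of being isolated JB*-algebras} applied inside $M_2(u)$ yields a sequence $(p_n)\subseteq\mathcal{P}(M_2(u))\setminus\{p\}$ with $p_n\to p$. Set $q_n:=u-p_n$ and $y_n:=i(p_n-q_n)$. Lemma~\ref{l distance sqrt2 between the unit and a tripotent} applied in the unital JB$^*$-algebra $M_2(u)$ gives $\|u\pm y_n\|\leq\sqrt 2$. A direct Peirce expansion, using $p_n\perp q_n$ and $p_n+q_n=u$ (so that the cross terms $\{p_n,q_n,x\}$ and $\{q_n,p_n,x\}$ vanish), yields
\[
\{y_n,y_n,x\}=\{p_n,p_n,x\}+\{q_n,q_n,x\}=\{u,u,x\},\qquad x\in M,
\]
whence $P_j(y_n)=P_j(u)$ for $j=0,1,2$. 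In particular $y_n$ is a complete tripotent of $M$, so $y_n\in\mathcal{M}_u\setminus\{y\}$ with $y_n\to y$, contradicting the isolation of $y$.

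Now apply Proposition~\ref{p complete tripotents in the Cstar algebra generated} with $u_1=p$, $u_2=q$: the JC$^*$-subalgebra $N\subseteq M$ generated by $\{1,p,q\}$ sits inside a C$^*$-algebra $A$ in which $u$ is a maximal partial isometry and $p,q$ are central projections of $A_2(u)$. Suppose, for contradiction, that $u$ is not unitary in $M$. Then $u$ cannot be unitary in $A$ either: if $uu^*=u^*u=1_A$, then in $A$ one has $u\circ u^*=1$ and $u^2\circ u^*=u$ via the Jordan product, and since all elements involved lie in $N\subseteq M$, these identities hold in $M$ as well, making $u$ unitary there. Hence $u$ is non-unitary in $A$, and Proposition~\ref{p MO for Cstar algebras from a Jordan point of view}(b), applied with $e=u$ and with the orthogonal central projections $\tilde p:=pu^*$ and $\tilde q:=qu^*$ summing to $l=uu^*$ in $A$ (after the standard reduction to $u^*u=1$ via \cite[Lemma~6.1]{hamhalter2019mwnc}), produces maximal partial isometries $y_\theta\in A$ with $\|u\pm y_\theta\|=\sqrt 2$, $y_\theta\to y$, and $y_\theta\neq y$ for small nonzero $\theta$ outside the exceptional set.

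The decisive obstacle is that $y_\theta$ is manufactured inside the auxiliary C$^*$-algebra $A$ rather than inside $M$, so it cannot directly contradict the isolation of $y$ in $\mathcal{M}_u\subseteq\partial_e(\mathcal{B}_M)$. I plan to close the gap by analysing the explicit formula
\[
y_\theta=P_2(u^*)(y)+\cos\theta\,P_1(u^*)(y)+\sin\theta\,P_1(u^*)(1)
\]
together with the estimate $\|y-P_2(y)(y_\theta)\|\leq 1-\cos\theta$ provided by Proposition~\ref{p MO for Cstar algebras from a Jordan point of view}(b). Exploiting that $l=uu^*=2(u\circ u^*)-1$ already lies in $N$, and that the generators $p,q,u,u^*,1$ obey enough Jordan relations within $N$, the goal is to trace each summand of $y_\theta$ back to $N\subseteq M$ (or, failing that, to replace $y_\theta$ by a nearby element of $\mathcal{M}_u$). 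Once this bridging step is secured, $y_\theta\in\mathcal{M}_u\setminus\{y\}$ converges to $y$ and contradicts isolation, forcing $u$ to be unitary in $M$.
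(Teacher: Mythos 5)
Your proposal follows the paper's route step for step (reduction via Proposition~\ref{p metric charact of unitaries JBtriples partial sufficient} to $y\in i\,\mathrm{Symm}(M_2(u))$, the centrality dichotomy through Proposition~\ref{p equivalences of being isolated JB*-algebras}, the embedding of Proposition~\ref{p complete tripotents in the Cstar algebra generated}, and the perturbations $y_\theta$ from Proposition~\ref{p MO for Cstar algebras from a Jordan point of view}), and your direct verification that $L(y_n,y_n)=L(u,u)$ in the non-central case is a clean substitute for the paper's appeal to \cite[Lemma 4]{Sidd2007}. However, the argument stops exactly where the real work begins: you announce a ``plan'' to show that $y_\theta$ can be used to contradict the isolation of $y$ in $\mathcal{M}_u$, but you do not carry it out, so the proof is not complete.

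Moreover, you have slightly misdiagnosed the obstacle. Membership of $y_\theta$ in $M$ is not the problem: $y_\theta=P_2(u^*)(y)+\cos\theta\,P_1(u^*)(y)+\sin\theta\,P_1(u^*)(1)$ is a triple-polynomial expression in the elements $u^*,y,1$ of the subtriple $N$, and Peirce projections associated with a tripotent of $N$ map $N$ into itself; hence $y_\theta\in N\subseteq M$ automatically. The genuine difficulty is that $y_\theta$ is a \emph{maximal} partial isometry of $A$, and completeness of a tripotent is a property of the ambient space -- it does not transfer from $\mathcal{B}_A$ to $\mathcal{B}_M$, so you still owe the claim $y_\theta\in\partial_e(\mathcal{B}_M)$. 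The paper closes this gap with the estimate $\|y-P_2(y)(y_\theta)\|\leq 1-\cos\theta<1$, which makes $P_2(y)(y_\theta)$ invertible in $M_2(y)$ for $\theta$ near $0$; since $y$ is an extreme point of $\mathcal{B}_M$, \cite[Lemma 2.2]{JamPerSidTah2015} then shows that $y_\theta$ is Brown--Pedersen quasi-invertible in $M$, and a quasi-invertible tripotent is complete, i.e., extreme in $\mathcal{B}_M$. Your sketch mentions the estimate but never invokes this quasi-invertibility mechanism (or any replacement for it), and ``tracing each summand of $y_\theta$ back to $N$'' would not by itself yield extremality in $M$. Until that step is supplied, the implication $(b)\Rightarrow(a)$ is not proved.
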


\begin{proof} Corollary \ref{c distance smaller than or equal to sqrt2} gives $(a)\Rightarrow (b)$.\smallskip

$(b)\Rightarrow (a)$ We shall show that if $u$ is not a unitary tripotent then every point $y \in \mathcal{M}_{u}$ is non-isolated. We therefore assume that $u$ is not a unitary tripotent. Let us fix $y\in \mathcal{M}_{u}$. If $P_1(u)(y)\neq 0$, Proposition \ref{p metric charact of unitaries JBtriples partial sufficient} implies that $y$ is non-isolated in $\mathcal{M}_{u}$. We can therefore assume that $P_1(u)(y) =0$, and hence $y = P_2(u)(y)$. So, $y$ and $u$ lie in the JB$^*$-algebra $M_2(u)$ (we observe that the latter need not be a JB$^*$-subalgebra of $M$). Since $y$ also is an extreme point of the closed unit ball of $M_2(u)$ and $\|u \pm y \| \leq  \sqrt{2},$ Corollary \ref{c distance smaller than or equal to sqrt2} implies that $y$ lies in $i \hbox{Symm}(M_2(u))$, therefore, there exist orthogonal tripotents $u_1,u_2\in M$ with $u_1,u_2\leq u$, $u_1+u_2 = u$ and $y = i (u_1-u_2)$.\smallskip

If $u_2$ is non-isolated in $\mathcal{P}(M_2(u))$, then there exists a sequence $(q_n)_n\subseteq \mathcal{P}(M_2(u))$ with $q_n\neq u_2,$ for all $n$, converging to $u_2$ in norm. In this case the sequence $(i (u-2 q_n))_n$ is contained in $\mathcal{M}_u\backslash \{y= i (u_1-u_2)\}$ (let us observe that $u- 2 q_n$ is a symmetry in $M_2(u)$ and since $u\in \partial_e(\mathcal{B}_M)$, \cite[Lemma 4]{Sidd2007} implies that $i(u-2q_n)\in \partial_e(\mathcal{B}_M)$ for all $n\in \mathbb{N}$, and clearly $\|u \pm i(u-2q_n)\|= \sqrt{2}$) and converges to $y$ in norm. We have therefore shown that $y$ is non-isolated in $\mathcal{M}_{u}$.\smallskip

We finally assume that $u_2$ is isolated in $\mathcal{P}(M_2(u))$. In this case Proposition \ref{p equivalences of being isolated JB*-algebras} proves that $u_2$ (and hence $u_1$) is a central projection in $M_2(u)$. We are in position to apply Proposition \ref{p complete tripotents in the Cstar algebra generated} to the tripotents $u_1,$ $u_2$ and $u= u_1+u_2$ in $M$. Let $N$ denote the JB$^*$-subalgebra of $M$ generated by $u_1$, $u_2$ and the unit element. By the just quoted proposition, $N$ is a JC$^*$-subalgebra of some C$^*$-algebra $B$, $u$ is a complete tripotent in the C$^*$-subalgebra $A$ of $B$ generated by $N$, and the elements $u_1,u_2$ are central projections in the JB$^*$-algebra $A_2(u)$. Let us observe that $u$ and $y$ both belong to $N$ (and to $A$).  Proposition \ref{p MO for Cstar algebras from a Jordan point of view}, applied to $A$, $u$, $p=u_1 u_1^*$, $q = u_2 u_2^*$, and $y$, implies that for each $\theta\in \mathbb{R}$ the element $$y_{\theta} := P_2(u^*) (y) + \cos(\theta) P_1 (u^*) (y) + \sin(\theta) P_1(u^*) (1)$$ is a maximal partial isometry in $A$ with  $\|u\pm y_{\theta}\|= \sqrt{2}$, and $y_{\theta}\neq y$ for all $\theta$ in $\mathbb{R}\backslash \left(2 \pi \mathbb{Z}\cup \pi \frac{1+2 \mathbb{Z}}{2}\right)$ because $u$ is not unitary in $N$ nor in $A$. We further know from the just quoted proposition that $\|y-P_2(y)(y_{\theta})\|\leq 1-\cos(\theta) $, and hence $P_2(y) (y_{\theta})$ is invertible in $N_2(y)$ for $\theta$ close to zero. Since $y\in \partial_e(\mathcal{B}_{M})$, it follows from \cite[Lemma 2.2]{JamPerSidTah2015} that $y_{\theta}$ is Brown-Pedersen quasi-invertible in the terminology of \cite{JamPerSidTah2015}, which combined with the fact that $y_{\theta}$ is a tripotent in $N$ (and hence in $M$), trivially implies that $y_{\theta}\in \partial_e \left(\mathcal{B}_{M}\right)$. Therefore, for $\theta$ close to zero, $y_{\theta}\in \mathcal{M}_{u}\backslash\{y\}$ and $y_{\theta}\to y$ in norm when $\theta\to 0$, witnessing that $y$ is non-isolated in $\mathcal{M}_u$.
\end{proof}

Let us conclude this note with some afterthoughts on JB$^*$-triples. Let $E$ be a JB$^*$-triple with dimension at least 2. Suppose $u$ is a complete tripotent in $E$ which is not unitary. In view of Corollary \ref{c distance smaller than or equal to sqrt2} and Theorem \ref{t metric charact of unitaries JBalgebras}, a natural topic remains to be studied: Does the set $\mathcal{E}_{u} = \{e\in \partial_e(\mathcal{B}_E) : \|u\pm e\|\leq \sqrt{2} \}$ contains no isolated points?\smallskip

Every JB$^*$-triple $E$ admitting a unitary element is a unital JB$^*$-algebra with Jordan product and involution given in \eqref{eq product jordan}. Actually, there is a one-to-one (geometric) correspondence between the class of unital JB$^*$-algebras and the class of JB$^*$-triples admitting a unitary element. The next corollary is thus a rewording of our Theorem \ref{t metric charact of unitaries JBalgebras}.

\begin{corollary}\label{c metric charact of unitaries JBtriples} Let $E$ be a JB$^*$-triple admitting a unitary element. Suppose $u$ is an extreme point of the closed unit ball of $E$. Then the following statements are equivalent:
\begin{enumerate}[$(a)$]\item $u$ is a unitary tripotent;
\item The set $\mathcal{E}_{u} = \{e\in \partial_e(\mathcal{B}_E) : \|u\pm e\|\leq \sqrt{2} \}$ contains an isolated point.
\end{enumerate}
\end{corollary}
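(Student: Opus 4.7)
The plan is to reduce the statement directly to Theorem \ref{t metric charact of unitaries JBalgebras} via the passage from a JB$^*$-triple with a unitary element to a unital JB$^*$-algebra. The excerpt already records that for any unitary tripotent $v$ in a JB$^*$-triple $E$, the Peirce-$2$ space $E_2(v)$ coincides with $E$ (see page \pageref{eq Peirce-2 is a JB-algebra}) and carries the structure of a unital JB$^*$-algebra $M$ with unit $v$, Jordan product $x\circ_v y:=\{x,v,y\}$ and involution $x^{*_v}:=\{v,x,v\}$. The starting point of the proof is therefore to fix such a $v\in E$, whose existence is guaranteed by hypothesis, and denote the resulting unital JB$^*$-algebra by $M$.

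Next I would verify that this change of viewpoint preserves all the data appearing in the statement. The underlying real Banach space of $M$ is by construction the same as that of $E$, so $\mathcal{B}_M=\mathcal{B}_E$ and in particular $\partial_e(\mathcal{B}_M)=\partial_e(\mathcal{B}_E)$. Moreover, by the formula \eqref{eq product jordan} computed for the pair $(\circ_v,\, *_v)$, the triple product canonically induced on $M$ by its JB$^*$-algebra structure coincides with the original triple product of $E$; this is precisely the ``geometric'' one-to-one correspondence quoted in the paragraph preceding the corollary. Consequently the set
\[
\mathcal{M}_{u}=\{e\in\partial_e(\mathcal{B}_M):\|u\pm e\|\leq\sqrt 2\}
\]
coincides, as a subset of the metric space $(E,\|\cdot\|)$, with the set $\mathcal{E}_u$ from the statement, and a point is isolated in one if and only if it is isolated in the other.

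It remains to identify the notion of unitarity on both sides. By \cite[Proposition 4.3]{BraKaUp78}, quoted after the definition of unitary tripotent on page \pageref{eq Peirce-2 is a JB-algebra}, the unitaries of the unital JB$^*$-algebra $M$ are exactly the unitary tripotents of $E$. Hence condition (a) of the corollary is identical to condition (a) of Theorem \ref{t metric charact of unitaries JBalgebras} applied to $M$, while condition (b) is identical to condition (b) of that theorem thanks to the identification of $\mathcal{M}_u$ with $\mathcal{E}_u$ above. Invoking Theorem \ref{t metric charact of unitaries JBalgebras} therefore closes the argument.

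The only delicate point, and the step I would write out with some care, is the verification that the triple product derived from $(\circ_v,*_v)$ via \eqref{eq product jordan} reproduces the original triple product of $E$; everything else is automatic. This is a standard Jordan-theoretic computation (it amounts to the unitary case of the identity $\{x,y,z\}=(x\circ_v y^{*_v})\circ_v z+(z\circ_v y^{*_v})\circ_v x-(x\circ_v z)\circ_v y^{*_v}$, in which $v$ acts as a unit), and poses no real obstacle once the definitions of $\circ_v$ and $*_v$ are unfolded.
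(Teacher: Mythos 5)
Your proposal is correct and follows exactly the route the paper intends: the paper presents this corollary as a direct rewording of Theorem \ref{t metric charact of unitaries JBalgebras} via the one-to-one correspondence between JB$^*$-triples admitting a unitary element and unital JB$^*$-algebras, which is precisely the reduction you carry out. Your additional care in checking that the induced triple product, the unit ball, and the notion of unitarity all match up is exactly the (standard) verification implicit in the paper's one-line justification.
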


A typical example of a JB$^*$-triple admitting no unitary tripotents is a rectangular Cartan factor of type 1 of the form $C=B(H,K)$, of all bounded linear operators between two complex Hilbert spaces $H$ and $K$, with dim$(H)>$dim$(K)$.

In the simplest case $K=\mathbb{C}$ is one dimensional, and hence $C= H$ is a Hilbert space with triple product $\{ a,b,c \} = \frac12( \langle a, b\rangle c +  \langle c, b\rangle a)$ ($a,b,c\in H)$. Every norm-one element in $C$ is an extreme point of its closed unit ball, that is, $\partial_e(\mathcal{B}_{C}) = S(C)$. Let us fix $u\in S(C)$. By assuming dim$(C)\geq2$ it is not hard to see that $$\mathcal{C}_{u} = \{e\in \partial_e(\mathcal{B}_C) : \|u\pm e\|\leq \sqrt{2} \} = \{ i t u + x : t\in \mathbb{R}, x\in C, \langle e, x\rangle =0, t^2 + \|x\|^2 = 1\},$$ is pathwise-connected.\smallskip

In the case in which dim$(K)\geq 2$, every complete tripotent in $C$ must be a partial isometry $u$ satisfying $uu^* = \hbox{id}_{K}$ (and clearly, $u^* u \neq \hbox{id}_{H}$). Let us take $y \in \mathcal{C}_{u} = \{e\in \partial_e(\mathcal{B}_C) : \|u\pm e\|\leq \sqrt{2} \}$. We shall see that $y$ is non-isolated in $\mathcal{C}_{u}$. By Corollary \ref{c distance smaller than or equal to sqrt2} and Proposition \ref{p metric charact of unitaries JBtriples partial sufficient} we can assume that $y \in i \hbox{Symm}(C_2(u))$, that is, there exist two orthogonal tripotents $u_1,u_2$ with $u_1,u_2\leq u$, $u_1+u_2 = u$, and  $y = i (u_1-u_2)$. We may assume that $u_2\neq 0$. Let us take a minimal tripotent $e$ such that $e\leq u_2$, that is, $u_2 = (u_2-e) + e$ with $ (u_2-e) \perp e$. In this case $e = \xi\otimes \eta :\zeta\mapsto \langle\zeta, \eta\rangle \xi $ with $\eta\in S(H)$, $\xi\in S(K)$. Since $u^* u \neq \hbox{id}_{H}$, we can pick $\tilde\eta \in S(H)$ with $\langle \tilde\eta , u^*u (H)\rangle =\{0\}.$ The element $\tilde{e}= \xi\otimes \tilde\eta$ is a minimal tripotent in $C$ with $\tilde{e}\perp u_1, u_2-e$. It is not hard to check that, for each real $\theta$, the element $y_{\theta}:= i (u_1 - (u_2-e) - \cos(\theta) e +\sin(\theta) \tilde{e})$ is a complete tripotent in $C$, by orthogonality and from the fact that  $\| \alpha e + \beta \tilde{e}\|^2 = |\alpha|^2 + |\beta|^2$ for all $\alpha, \beta\in \mathbb{C}$, we can deduce that $$\| u \pm y_{\theta} \|  = \max\{ \|(1\pm i) u_1 \|, \|(1\mp i) (u_2-e) \|,  \|(1\pm i \cos (\theta)) e \pm \sin(\theta) \tilde{e}\| \}=\sqrt{2}.$$ Since $y\neq y_{\theta}\to y$ for $\theta\to 0$, we conclude that $y$ is non-isolated in $\mathcal{C}_{u}$ as claimed.

\medskip\medskip

\textbf{Acknowledgements} Authors partially supported by the Spanish Ministry of Science, Innovation and Universities (MICINN) and European Regional Development Fund project no. PGC2018-093332-B-I00 and Junta de Andaluc\'{\i}a grant FQM375. First author also supported by Universidad de Granada, Junta de Andaluc{\'i}a and Fondo Social Europeo de la Uni{\'o}n Europea (Iniciativa de Empleo Juvenil), grant number 6087.\smallskip

\end{document}